\newcommand{\MIN}{\mathcal{MIN}}
\newcommand{\PMF}{\mathcal{PMF}}
\newcommand{\MF}{\mathcal{MF}}
\newcommand{\UE}{\mathcal{UE}}
\newcommand{\Star}{{\sf S}}
\newcommand{\ZZ}{{\sf Z}}
\newcommand{\C}{\ensuremath{\mathbb{C}}}
\newcommand{\R}{\ensuremath{\mathbb{R}}}
\renewcommand{\H}{\ensuremath{\mathbb{H}}}
\newcommand{\Z}{\ensuremath{\mathbb{Z}}}
\newcommand{\T}{\mathcal{T}}
\newcommand{\Teich}{~Teich\-m\"uller~}
\newcommand{\binf}{\partial_\infty}
\newcommand{\bhor}{\partial_h}
\newcommand{\Thin}{\mathop{\rm Thin}}
\newcommand{\Prod}{\mathop{\rm Prod}}
\newcommand{\cc}{{\sf c}}
\newcommand{\sdot}{\! \cdot \!}
\newcommand{\ds}{d_\star}
\newcommand{\dc}{d_\mathcal{C}}
\newcommand*{\mybox}[1]{%
  \framebox{\raisebox{0pt}[0.5\baselineskip][0.05\baselineskip]{%
    #1}}}
\theoremstyle{remark}
\theoremstyle{definition}
\newtheorem{theorem}{Theorem}
\newtheorem*{theorem*}{Theorem}
\newtheorem*{thma}{Theorem A}
\newtheorem*{corb}{Corollary B}
\newtheorem*{lemc}{Lemma C}
\newtheorem*{thmd}{Theorem D}
\newtheorem*{thme}{Theorem E}
\newtheorem*{conjecture}{Conjecture}
\newtheorem{proposition}[theorem]{Proposition}
\newtheorem{lemma}[theorem]{Lemma}
\begin{document}

\title{Stars at infinity in Teichm\"uller space}
\author{Moon Duchin and Nathan Fisher}
\date{\today}

\begin{abstract}
We investigate a metric structure on the Thurston boundary of \Teich space.  To do this, we develop tools in sup metrics and apply Minsky's theorem.
\end{abstract}

\maketitle

\section{Introduction}

In geometric topology and geometric group theory, the study of boundaries at infinity has repeatedly provided crucial tools in establishing dynamical results, often 
with surprising geometric and algebraic consequences.  Negatively curved spaces enjoy an extremely rich boundary theory through Gromov's {\em visual boundary},
constructed through geodesics or quasigeodesics.  Unfortunately, visual boundaries are much less well-behaved outside of the negative curvature setting,
even in nonpositively curved spaces.  Still, boundaries for metric spaces sometimes exhibit points called {\em hyperbolic points} with some of the properties
enjoyed by boundary points in hyperbolic spaces, such as so-called ``visibility" properties that describe which pairs of boundary points can be connected by geodesics.  
For any bordification of a metric space, a construction of Anders Karlsson induces extra structure on the boundary
by breaking it down into sets called {\em stars} \cite{karlsson-stars}. The star of a boundary point $\xi$ is built through a purely metric definition, and contains other boundary points that are in a
certain sense metrically indistinguishable:  as we will define below, the star of $\xi$ consists of those boundary points which cannot be separated from $\xi$ by halfspaces.  When the star is a single point, the direction has hyperbolic features.  

\Teich space has been a nexus of attention in topology and group theory.  The \Teich space $\T(S)$ parametrizes geometric structures on the topological surface $S$,
and it carries its own geometries which are well-studied in their own right, especially the \Teich metric, a complete geodesic Finsler metric with a very nice interpretation
in terms of flat structures on $S$.
Much attention has been paid to the Thurston boundary 
$\PMF(S)$ of the \Teich space $\T(S)$ for a surface $S=S_{g,n}$ of finite type.  
Its elements, called {\em projective measured foliations}, can be regarded as completing  the simple closed curves on $S$ with respect to a natural topology.  
Where $h=6g-6+2n$, Thurston established that this boundary is a sphere of dimension
$h-1$ compactifying $T(S)$, which itself is topologically a ball of dimension $h$.  Unfortunately, the boundary lacks some of the nice 
geometric properties present in boundaries of hyperbolic spaces:  it is basepoint-dependent \cite{Kerckhoff}, it has limited visibility
(not every two points on the boundary can be connected by a geodesic) \cite{Gardiner-Masur}, 
and indeed some \Teich geodesics have large accumulation sets on the boundary \cite{lenzhen}, while others are not approached by geodesics \cite{masur-thesis}.  Karlsson asked how halfspaces in the \Teich metric separate points in the Thurston boundary $\PMF$, and he predicted that it lines up with intersection of foliations \cite{karlsson-stars}. That is, Karlsson conjectured that disjoint foliations are precisely the ones that can't be separated.
\begin{conjecture}[Karlsson] $\Star(F)=\ZZ(F)$ for all $F\in\PMF$. \end{conjecture}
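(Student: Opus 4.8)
The plan is to prove both inclusions $\ZZ(F)\subseteq\Star(F)$ and $\Star(F)\subseteq\ZZ(F)$ by first understanding the star construction in model \emph{sup metrics} and then importing that understanding into $\T(S)$ through Minsky's product region theorem. The organizing principle is that Karlsson's halfspaces are sublevel sets of horofunctions, so I would first recast separation in terms of Busemann functions of Teichm\"uller geodesic rays. In a product $X_1\times\cdots\times X_k$ equipped with the sup metric $\max_i d_i$, the Busemann function of a diagonal unit-speed ray is the maximum of the factor Busemann functions, $b(x)=\max_i b_i(x_i)$; consequently a halfspace in a sup metric ``sees'' only the dominant factor, and two boundary directions can be separated precisely when they differ in a factor that is dominant for one of them. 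This is the sup-metric star calculus I would develop first, and the whole proof is an effort to certify that under Minsky's identification of thin parts with sup-metric products up to bounded additive error, ``differ in a dominant factor'' translates exactly to ``have positive intersection number.''

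For the inclusion $\Star(F)\subseteq\ZZ(F)$ I would argue the contrapositive and \emph{construct} a separating halfspace whenever $i(F,G)>0$. The key input is Minsky's inequality $i(F,G)\le\sqrt{\ext_X(F)\,\ext_X(G)}$: along any Teichm\"uller ray $X_t$ directed by $F$ one has $\ext_{X_t}(F)\to0$ exponentially, so positivity of $i(F,G)$ forces $\ext_{X_t}(G)\to\infty$. Since the extremal length of a sum dominates that of each summand, this blow-up persists even when $F$ and $G$ share annular or subsurface support. I would then take the halfspace to be a deep sublevel set of the Busemann function $b_F$: it contains the tail of the $F$-ray but, because $\ext(G)$ blows up along the $F$-direction while $\ext(F)$ blows up along the $G$-direction, it excludes a neighborhood of $G$. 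When $F$ is not minimal, the product region theorem lets me peel off the short annular factors one at a time and run an induction on the complexity of $S$, with the base case $\T=\H$ being genuinely hyperbolic, hence strongly visible, so that every pair of distinct points separates.

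For the reverse inclusion $\ZZ(F)\subseteq\Star(F)$ I would show that $i(F,G)=0$ prevents \emph{any} halfspace from separating $F$ and $G$. Here I decompose $F$ and $G$ according to their common subsurface and annular support: vanishing intersection means that on each factor appearing in Minsky's product the two foliations either coincide or occupy complementary pieces, so the factor Busemann functions agree on the shared factors while the remaining factors are never dominant. The max-Busemann identity then forces the horofunctions of $F$ and $G$ to agree coarsely, and more generally forces every halfspace whose closure meets $\PMF$ at $F$ to also contain $G$. The delicate point is that this must be verified against \emph{all} candidate halfspaces, not only those built from $F$ or $G$.

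The hardest case, and the one I expect to be the main obstacle, is non-unique ergodicity: two distinct transverse measures carried by the same minimal topological lamination have intersection number zero and hence must be shown to lie in a common star, yet the Teichm\"uller geodesics directed by such foliations can have large accumulation sets on $\PMF$ (as in Lenzhen's examples) and traverse a whole hierarchy of thin regions. Certifying non-separation here requires uniform control of the additive errors in the product region theorem across all the active intervals along the ray (in the sense of Rafi's combinatorial description of Teichm\"uller geodesics), so that the coarse agreement of Busemann functions is not destroyed as the dominant factor changes. A secondary technical bridge, which I would establish at the outset, is the reconciliation of the two compactifications in play: Karlsson's halfspaces are defined by the Teichm\"uller-metric horofunctions, living on the Gardiner--Masur boundary, whereas the stars $\Star(F)$ are taken inside the Thurston boundary $\PMF$, and matching the two closures is needed before the sup-metric calculus can be applied.
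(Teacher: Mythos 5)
The first thing to say is that the statement you set out to prove is stated in the paper as an open \emph{conjecture}: the paper itself proves only the inclusion $\ZZ(F)\subseteq\Star(F)$ (Theorem A), while the opposite inclusion $\Star(F)\subseteq\ZZ(F)$ is known only for $F\in\MIN$, by Karlsson's Theorem 44. So a complete proof of the equality cannot be checked against the paper, and the burden is on your sketch to close the genuinely open half --- which it does not. Your argument for $\Star(F)\subseteq\ZZ(F)$ is essentially Karlsson's extremal-length argument (via $i(F,G)\le\sqrt{\ext_X(F)\ext_X(G)}$), which is exactly what works in the minimal case; the extension to non-minimal $F$ by ``peeling off the short annular factors and running an induction on complexity'' is named but never carried out, and this is precisely where the known proof stops. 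Concretely: to separate two \emph{intersecting simple closed curves} you must show no halfspace-closure contains both, which in this paper's framework is tied to the (open) stickiness conjecture for \Teich geodesics with curve endpoints; your ``deep sublevel set of $b_F$'' candidate also quietly replaces Karlsson's halfspaces $H(V,C)$, defined from neighborhoods $V$ in the Thurston bordification, by horoball sublevels in the Gardiner--Masur horoboundary, and the reconciliation you defer to ``a secondary technical bridge'' is not secondary --- it is a substantive unresolved step.

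For the inclusion the paper does prove, your route differs from the paper's and has a gap at its crux. You argue that $i(F,G)=0$ forces the horofunctions of $F$ and $G$ to ``agree coarsely'' via a max-of-Busemann identity in Minsky's product regions; but for disjoint foliations supported on complementary subsurfaces the Busemann functions do not agree even coarsely (along a ray pinching the support of $F$, they diverge from one another linearly), and your own flagged hard case --- distinct ergodic measures on a common topological support, where the ray passes through infinitely many thin regions --- is exactly where ``uniform control of the additive errors across all active intervals'' is invoked without proof. The paper avoids all horofunction analysis of the \Teich metric. Its mechanism is: (i) a sequence criterion for star membership (Lemma C: $\eta\in\Star(\xi)$ iff there exist $x_n\to\xi$, $y_n\to U$, and $C$ with $d(y_n,x_n)\le d(y_n,x_0)+C$) together with semicontinuity of stars; (ii) explicit pinching sequences in $\Prod_\Gamma$, e.g.\ $x_n'=(\sigma,e^n i,ki)$ and $y_n'=(\sigma,ki,e^n i)$, which verify the criterion for disjoint multicurves with additive error $2\cc$ from Minsky's theorem; and (iii) the Lenzhen--Masur simultaneous approximation theorem, upgraded in Proposition 4 to arbitrary disjoint pairs, producing multicurves $P_n$ with weights $a_n\sdot P_n\to F$, $b_n\sdot P_n\to G$, so that semicontinuity transports star membership from multicurves to $F$ and $G$. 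Step (iii) is the device that handles non-minimal and non-uniquely-ergodic foliations without ever tracking Busemann functions along the ray, and it has no counterpart in your proposal; without it (or an equivalent), your reverse inclusion does not go through.
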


He proved part of that in his original paper.

\begin{theorem*}[Karlsson, \cite{karlsson-stars} Thm 44] 
For minimal foliations, only those with the same underlying topology can belong to a given star:
$\Star(F)\subseteq \ZZ(F)$ for all $F \in \MIN$.
\end{theorem*}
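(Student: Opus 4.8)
The plan is to prove the contrapositive. Working in the bordification, a halfspace is a set of the form $H_{x,y}=\{z\in\T(S): d_\T(z,x)\le d_\T(z,y)\}$, and $\Star(F)$ consists of the boundary points that cannot be separated from $F$ by such halfspaces; for minimal $F$ we have $\ZZ(F)=\{G\in\PMF : i(F,G)=0\}$, which are exactly the foliations sharing the underlying topology of $F$. So it suffices to show that whenever $i(F,G)>0$ there exist $x,y\in\T(S)$ with $F\in\overline{H_{y,x}}$ but $G\in\overline{H_{x,y}}\setminus\overline{H_{y,x}}$, exhibiting a halfspace that separates $G$ from $F$ and placing $G\notin\Star(F)$. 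The link to the metric is Kerckhoff's formula $d_\T(X,Y)=\tfrac12\log\sup_\alpha \ext_Y(\alpha)/\ext_X(\alpha)$, which writes $d_\T$ in precisely the sup form that the paper's sup-metric tools are designed to handle, so that a halfspace inequality becomes a comparison of suprema of extremal-length ratios.

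The quantitative engine is Minsky's extremal-length inequality $\ext_X(F)\,\ext_X(G)\ge i(F,G)^2$. I would approach $F$ along a sequence $X_n\to [F]$ that pinches $F$, so that $\ext_{X_n}(F)\to 0$ after normalization; the inequality then forces $\ext_{X_n}(G)\ge i(F,G)^2/\ext_{X_n}(F)\to\infty$, a blow-up whose rate is controlled from below by the fixed positive number $i(F,G)$. This is the definite, $n$-independent asymmetry between how the family $X_n$ sees $F$ and how it sees $G$: $F$ is made short while $G$ is forced long, and this is exactly the discrepancy a halfspace comparison can detect. By contrast, when $i(F,G)=0$ no such lower bound is available and the two extremal lengths may decay together, consistent with $G$ lying in $\Star(F)$.

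To convert the extremal-length gap into a genuine separating halfspace I would pass to the thin part where $F$ is short and invoke Minsky's product regions theorem, replacing $d_\T$ on $\Thin$ by the sup metric on the product $\Prod$ of lower-complexity Teichm\"uller spaces and $\H$-factors recording the pinched curves. In that sup metric, halfspaces and their boundary closures are governed coordinatewise, and the paper's sup-metric machinery identifies exactly which limiting directions land on which side of a coordinate halfspace. Taking $x=x_0$ and $y$ a point driven deep into the factor adapted to $F$, the coordinate analysis should put the $F$-direction on the $y$-side and the now-long $G$-direction on the $x_0$-side, with separation margin inherited from the blow-up rate above.

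The main obstacle is reconciling the additive error in Minsky's theorem with the sharp inequality defining a halfspace: $d_\T$ and the model sup metric agree only up to an additive constant, whereas membership in $\overline{H_{x,y}}$ is decided by an exact comparison of distances. The separation survives only if the gap generated by $i(F,G)>0$ is a definite positive quantity that eventually dominates Minsky's constant uniformly along the whole sequence, so the core of the argument is establishing and propagating such a uniform margin. A secondary difficulty, and the reason the hypothesis is $F\in\MIN$ rather than $F\in\UE$, is that a minimal foliation need not be uniquely ergodic, so the $F$-directed Teichm\"uller ray need not converge to $[F]$ and Masur's convergence theorem is unavailable; I would therefore phrase everything through extremal-length level sets, which see only the measured-foliation data of $F$ and are insensitive to the choice of ergodic component, so that the construction runs uniformly over all of $\MIN$.
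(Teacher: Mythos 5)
First, note that the paper you are working from does not actually prove this statement: it is quoted verbatim from Karlsson \cite{karlsson-stars}, so your proposal must be measured against Karlsson's cited argument and against the paper's own machinery (Lemma C, the halfspace definitions). You have correctly identified the analytic engine --- Kerckhoff's formula $d_\T(x,y)=\tfrac12\log\sup_H \ext_y(H)/\ext_x(H)$ together with Minsky's inequality $\ext_x(F)\,\ext_x(G)\ge i(F,G)^2$ --- and that is indeed the right raw material. But the argument as structured has a quantifier-level gap that is fatal. The halfspaces in the star definition are not two-point bisector halfspaces $H_{x,y}$; they are the based sets $H^{x_0}(V,C)=\{z: d(z,V)\le d(z,x_0)+C\}$ for neighborhoods $V$ of $F$ and \emph{arbitrary} $C\ge 0$. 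By the contrapositive of Lemma C, to show $G\notin\Star(F)$ you must produce a neighborhood $U$ of $G$ such that \emph{no} sequences $x_n\to F$, $y_n\to U$ satisfy $d(y_n,x_n)\le d(y_n,x_0)+C$ for \emph{any} $C$: the adversary chooses the approach to $F$, and the defect $d(y_n,x_n)-d(y_n,x_0)$ must diverge, not merely dominate a fixed margin. Your construction fixes a single pinching sequence $X_n\to[F]$ and separates along it; this rules out nothing, since star membership only requires \emph{some} sequence to work. Relatedly, your worry about ``reconciling the additive error in Minsky's theorem with the sharp inequality defining a halfspace'' dissolves once the definition is stated correctly --- additive constants are harmless because $C$ is arbitrary --- but the divergence requirement is strictly harder than the uniform margin you aim for.

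Second, the thin-part step is inapplicable to the hypothesis at hand: $\Thin_\Gamma$ and $\Prod_\Gamma$ exist only for multicurves $\Gamma$, and a minimal foliation $F$ has no thin part to be ``pinched'' into; sequences pinching a multicurve approximating $F$ converge in $\PMF$ to weighted multicurves, not to $F$, and no fixed product region contains a sequence converging to a minimal point of $\PMF$. In this paper the product-region machinery serves the \emph{opposite} inclusion (Theorem A, placing points \emph{into} stars); separation cannot be localized that way. Finally, the asymptotic ``$\ext_{X_n}(F)\to 0$ after normalization'' is not delivered by Thurston-boundary convergence: $x_n\to[F]$ in $\PMF$ controls ratios of \emph{hyperbolic} lengths, and transferring that control to extremal lengths, uniformly over all adversarial sequences $x_n\to F$, is precisely the technical content of Karlsson's proof. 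That is also where minimality genuinely enters --- any limit of normalized length functionals vanishing against $F$ must then be supported on $\ZZ(F)$ --- rather than in the role you assign it (availability of Masur's ray-convergence theorem). As it stands, the proposal assembles the correct ingredients but with the quantifiers inverted and with a localization tool that does not apply, so it does not constitute a proof.
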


In this paper, we study the stars in the Thurston boundary, addressing Karlsson's conjecture.
We show that the hyperbolic points in the boundary are precisely the {\em uniquely ergodic foliations}, a well-studied subset of 
$\PMF$ that coincides with its Poisson boundary, as established by Kaimanovich--Masur \cite{kaim-mas}.

\begin{thma} [Stars are bigger than zero-sets]
Non-intersecting foliations always belong to the same star:
$\ZZ(F)\subseteq \Star(F)$ for all $F\in\PMF$. \label{DZ}\end{thma}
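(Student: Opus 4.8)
The plan is to prove the statement directly: given $F$ and $G$ with $i(F,G)=0$, I will show that no halfspace separates the approach to $F$ from the approach to $G$, which is precisely the assertion $G\in\Star(F)$. Following Karlsson, a halfspace is cut out by a geodesic ray, equivalently by its associated horofunction $b$; unwinding the definition, separating two boundary points requires a single such horofunction that decreases without bound along rays approaching one of them while failing to do so along rays approaching the other, so that the two points lie in complementary open halfspaces. The whole difficulty is therefore to rule out, for disjoint $F$ and $G$, the existence of any ray whose horofunction ``points toward'' $F$ while simultaneously ``pointing away from'' $G$. My strategy is to reduce this to a statement about sup metrics on products, where such opposing behavior is impossible, and then to import it into $\T(S)$ via Minsky's product regions theorem.

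First I would record the structural consequence of $i(F,G)=0$: the two foliations admit simultaneously disjoint representatives, so there is a multicurve $\alpha=\alpha_1\cup\dots\cup\alpha_k$ and a decomposition of $S$ along $\alpha$ relative to which the supports of $F$ and $G$ either coincide on shared (annular or subsurface) pieces or are carried by disjoint pieces. This is the combinatorial input that makes the two directions ``coordinate-independent.'' Then I would drive a family of surfaces deep into the thin part $\Thin_\epsilon(\alpha)$, where every $\alpha_j$ is short, so that Minsky's theorem applies: there the \Teich metric agrees, up to a uniform additive constant, with the sup metric on the product of the annular factors $\H_{\alpha_j}$ and the \Teich space $\T(S\setminus\alpha)$ of the cut surface. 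The approaches to $F$ and to $G$ are arranged to degenerate in these product coordinates -- annular heights tending to infinity and/or subsurface coordinates tending to the relevant sub-boundary -- and disjointness guarantees that no single coordinate is pulled toward $F$ in one direction and toward $G$ in the opposite direction.

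The engine is then the sup-metric tool developed earlier: in a product $X=\prod_j X_j$ with metric $\max_j d_j$, a horofunction for a ray that degenerates only in a sub-collection of the factors is the maximum of the corresponding factor horofunctions, and is therefore insensitive -- bounded, and not decreasing to $-\infty$ -- in the complementary factors. Consequently, if two boundary directions never genuinely oppose each other factor-by-factor, then no halfspace separates them. Applying this with $F$ and $G$ placed in the coordinates of $\Thin_\epsilon(\alpha)$, the horofunction of any candidate separating ray cannot decrease without bound toward $F$ while failing to decrease toward $G$; the additive error from Minsky's theorem is harmless because separation is an asymptotic ($\pm\infty$) condition insensitive to bounded constants. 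This yields $G\in\Star(F)$.

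The main obstacle I anticipate is the passage between the global objects -- the Thurston-boundary points $F,G$ and the halfspaces defined over all of $\T(S)$ -- and the local product structure, which Minsky's theorem supplies only inside the thin part. I must ensure that the horofunction comparisons can be carried out, or adequately estimated, within a single product region, that excursions into the thick part cannot smuggle in a separation, and that the link between a Thurston-boundary point and the horofunctions approaching it is controlled. Handling the general topological type of $F$, whose minimal components fill subsurfaces rather than being simple closed curves, is where the real work lies: one must choose the decomposition along $\alpha$ so that each component lands in a controlled factor -- annular pieces modeled on $\H$, filling pieces on the \Teich spaces of the complementary subsurfaces -- and correctly bookkeep the shared versus disjointly-supported components so that the sup-metric lemma truly applies.
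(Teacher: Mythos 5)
Your reduction breaks down at its first step, and in exactly the case that carries all the content of Theorem A. You posit a single multicurve $\alpha$, disjoint from both $F$ and $G$, and then work inside the one thin part $\Thin_\alpha$ where Minsky's theorem applies. But if $F$ is minimal and filling and $G$ carries a different transverse measure on the same topological foliation (the non--uniquely ergodic case, which is precisely what distinguishes Theorem A from the trivial containment and what drives Corollary B), then \emph{every} essential simple closed curve intersects $F$ and $G$: there is no such $\alpha$, no fixed $\Thin_\alpha$, and your horofunction comparison has nowhere to live. The paper's proof is organized specifically to avoid this problem. It proves star membership only for genuinely \emph{disjoint multicurves}, by exhibiting explicit pinching sequences $x_n', y_n'$ in a Minsky product region and verifying the hypothesis of the sequence criterion (Lemma C), with the $(1,\cc)$-quasi-isometry contributing only the harmless additive $2\cc$ -- your observation that additive error is tolerable is correct and is the same point. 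It then reaches general $F,G$ with $i(F,G)=0$ through two ingredients your proposal is missing: the Lenzhen--Masur simultaneous approximation (Theorem~\ref{approx}, extended in Proposition~\ref{simult}), which produces a \emph{sequence} of multicurves $P_n$, changing with $n$, and weights with $a_n\sdot P_n\to F$ and $b_n\sdot P_n\to G$; and semicontinuity of stars (Lemma~\ref{semicont}), which transports the relation $b_n\sdot P_n\in\Star(a_n\sdot P_n)$ to the limit $G\in\Star(F)$. The geometric mechanism behind Lenzhen--Masur is that a geodesic toward a minimal non-filling-by-curves foliation diverges in moduli space, so \emph{some} curve is short at each large time, but the short curve varies -- so no single product region suffices, and the limiting step cannot be dispensed with.

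A secondary issue: your opening reformulation -- that separation of boundary points is witnessed by a single horofunction decreasing without bound toward one point and not the other -- is not the definition in play and is not established anywhere in the paper. Halfspaces $H(W,C)$ are defined by a direct distance comparison, and the workable characterization of star membership is the sequence criterion of Lemma C; if you want the horofunction version you would have to prove the equivalence, which is an additional (and nontrivial) burden. Your sup-metric intuition about which directions can and cannot be separated is sound -- it matches Theorem E, in particular that $h^{\rm N}\in\Star(h^{\rm E})$ and vice versa -- but note that even the paper does not apply Theorem E directly to $\T(S)$; it re-runs the sequence argument inside $\Prod_\Gamma$ because the product structure is only local, exactly the obstacle you flagged but did not overcome.
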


\begin{corb} $\Star(F)=\{F\} \iff F\in\UE$. \end{corb}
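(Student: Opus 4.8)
The plan is to obtain Corollary B by feeding the classical reformulation of unique ergodicity---namely that $\ZZ(F)=\{F\}$ holds exactly when $F\in\UE$---into the two inclusions already available: Theorem A, which gives $\ZZ(F)\subseteq\Star(F)$ for every $F$, and Karlsson's theorem, which gives $\Star(F)\subseteq\ZZ(F)$ when $F$ is minimal. With that reformulation in hand the corollary is immediate, and the substance of the argument lies in establishing the reformulation from the structure theory of measured foliations.

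For the implication $F\in\UE\Rightarrow\Star(F)=\{F\}$, I use that a uniquely ergodic foliation is minimal, so Karlsson's inclusion applies and yields $\Star(F)\subseteq\ZZ(F)$. Unique ergodicity, together with the fact that such an $F$ fills $S$, forces $\ZZ(F)=\{F\}$: any $G$ with $i(F,G)=0$ must be carried by the same topological foliation as $F$, and having a unique transverse measure up to scale collapses every such $G$ to the projective class of $F$. Since $F$ lies in its own star---indeed $F\in\ZZ(F)\subseteq\Star(F)$ by Theorem A---I conclude $\Star(F)=\{F\}$.

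For the converse I argue contrapositively: assuming $F\notin\UE$, I exhibit a point of $\ZZ(F)$ other than $F$, and then Theorem A upgrades this to $\Star(F)\supseteq\ZZ(F)\supsetneq\{F\}$. The canonical decomposition of $F$ into minimal and periodic pieces organizes the cases. If a periodic piece occurs, its core curve is a closed leaf disjoint from $F$; if the minimal pieces fail to cover $S$, a curve in the complementary subsurface is disjoint from $F$; if two distinct minimal pieces occur, the curve along which their supports meet is disjoint from $F$; and in the remaining case $F$ is minimal and filling, so failure of unique ergodicity means its cone of transverse measures is at least two-dimensional and furnishes a projectively distinct $G$ with $i(F,G)=0$. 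In every case $\ZZ(F)\supsetneq\{F\}$, which is what the contrapositive requires.

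The main obstacle is precisely this last case analysis, not the star-theoretic bookkeeping, which Theorem A and Karlsson's inclusion hand over cleanly. Its delicate point is that producing a disjoint companion curve presupposes enough room in $S$; the lowest-complexity surfaces, where $\T(S)$ is Gromov hyperbolic and hence every star is automatically a singleton, must therefore be set aside, since there a non-uniquely-ergodic simple closed curve $\gamma$ has $\ZZ(\gamma)=\{\gamma\}$ yet still $\Star(\gamma)=\{\gamma\}$, violating the equivalence. Away from these exceptional cases I expect the structure theorem for measured foliations to deliver the required disjoint foliation in each branch above, completing the proof.
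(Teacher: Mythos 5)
Your proposal follows exactly the route the paper intends: Corollary B is stated without proof because it drops out immediately from Theorem A ($\ZZ(F)\subseteq\Star(F)$), Karlsson's inclusion $\Star(F)\subseteq\ZZ(F)$ for minimal $F$, the observation $F\in\Star(F)$, and the equivalence $\ZZ(F)=\{F\}\iff F\in\UE$, which the paper records as a known background fact rather than reproving. Your additional material---the case analysis establishing that equivalence via the decomposition into minimal and periodic pieces, and the caveat that on the lowest-complexity surfaces (implicitly excluded throughout, as is standard) the equivalence $\ZZ(F)=\{F\}\iff F\in\UE$ itself fails---is sound but supplementary, not a departure from the paper's argument.
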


Furthermore, the star structure on $\PMF$ induces a metric on the set of simple closed curves $\mathcal S(S)$. We develop tools in the hope of relating this star metric to the metric coming from the curve graph $\mathcal{C}(S)$.

\begin{conjecture}
The star metric $\ds$ and the curve complex distance $\dc$ are isometric on the set of simple closed curves $\mathcal{S}\subset \PMF$.
\end{conjecture}
This would tell us that the distance function induced by disjointness is the same as the distance function induced by star-membership:
you can use \Teich distance alone to see a copy of the curve complex in the boundary.

The main ingredient in the proof of Theorem A is Minsky's theorem that the regions of \Teich space where some curves are very short (the ``thin parts")
are additively well-approximated by sup metrics.  We will deduce the desired conclusions from establishing that stars
in sup metrics are suitably large.  

Along the way we establish three results that may be of independent interest for the study of boundaries and random walks.
Lemma C gives a necessary and sufficient condition for $\eta\in\Star(\xi)$ in terms of sequences and a metric inequality.
For vector spaces with sup metrics, 
Theorem D describes the horofunctions explicitly in terms of a family of geodesics and derives a topology on the boundary; Theorem E constructs the stars in that horoboundary.

%


\subsection*{Acknowledgements} Thanks to Joseph Maher for collaborating on an earlier incarnation 
of this project. Thanks to Sunrose Shrestha, Thomas Weighill, Chris Leininger, Howie Masur, Ruth Charney, and Kasra Rafi for their ears and insights,  and thanks to Anders Karlsson for suggesting the problem and for many interesting and useful conversations.

\section{Background and basic properties}

Recall Thurston's main tools to understand \Teich geometry on $S$, the simple closed curves $\mathcal{S}$ and the measured foliations $\MF$, 
which can be related with an intersection form $i(\cdot,\cdot)$.  
In the intervening years, the combinatorics of the set of simple closed curves $\mathcal S(S)$ has been mined very productively:  the {\em curve complex} 
declares two curves to be adjacent if they are disjointly realizable, and we define a distance function as the length metric on the graph.  
In the 1980s, Bonahon showed how to interpret $i(\cdot,\cdot)$ as a continuous bilinear form on the larger space of geodesic currents, and how to view $\T(S)$ and
$\PMF(S)$ as sitting compatibly inside  currents \cite{bon-currents}, with the Thurston compactification showing up as directly analogous to the 
sphere at infinity for a hyperboloid in Lorentz geometry.  Closed curves and measured foliations also embed in the  space of currents.
Let us define the {\em zero-set} of a foliation to be all the foliations that miss it:
$$\ZZ(F):= \{ G\in\PMF : i(F,G)=0\}.$$
We can designate sub-classes of foliations 
 $$\UE \subset \MIN \subset \PMF,$$
where 
{\em minimal foliations} $(\MIN)$ are those for which every leaf is topologically dense in $S$, and 
{\em uniquely ergodic foliations} $(\UE)$ are minimal foliations that carry a unique transverse measure.
Then, since every transverse measure is a convex combination of finitely many mutually singular ergodic measures,
these zero-sets can be thought of as polyhedra in the boundary, and 
we have $$\ZZ(F)=\{F\} \iff F\in \UE.$$

Next, we review the general theory of halfspaces and stars at infinity developed by Karlsson in \cite{karlsson-stars}. 
The definitions serve in a more general setting, but here we assume $X$ is a complete proper geodesic metric space and fix a basepoint $x_0 \in X$. For a subset $W \subset X$ and a constant $C\geq0$, define the halfspace $H(W, C)$ by
$$H(W,C) = H^{x_0}(W,C) := \{z : d(z,W) \leq d(z,x_0) + C\}.$$
Note that if $W$ is equal to a point $y$ and $C = 0$, then $H(\{y\}, 0)$ defines a standard halfspace.

We let $\overline X$ be any Hausdorff bordification of $X$ and denote the boundary $\partial X = \overline X - X$.
Two examples of bordifications frequently seen in the setting of geometric group theory include the visual boundary $\binf(X)$, the set equivalence classes of basepointed geodesic rays, and the horofunction boundary $\bhor(X)$ obtained by embedding $X$ into the space $C(X)$ of real-valued continuous functions on $X$. To be precise, fix a point $x_0$ in $X$, and embed $X$ into $C(X)$ via the map
$$
\Psi: z \mapsto d(z, \cdot) - d(z, x_0).
$$ 
Define the horofunction boundary to be $\bhor(X) = \overline{\Psi(X)} \backslash \Psi(X).$ A sequence $x_n$ in $X$ converges to a point in the boundary if and only if the sequence of functions $d(x_n, \cdot) - d(x_n, x_0)$ converges uniformly on compact sets to a function not in the image of $\Psi$.

To relate the visual boundary to the horofunction boundary, when $\gamma$ is a geodesic ray based at $x_0$ in $X$, then it is a simple exercise to show that $\Psi(\gamma(t))$ converges as $t \to \infty$ to a horofunction in the boundary. This class of horofunctions coming from geodesic rays are called {\em Busemann functions}.

Returning to the general setting, we assume $\overline X$ is a Hausdorff bordification, and we let $\mathcal{V}_\xi$ denote the collection of open neighborhoods in $\overline X$ of a boundary point $\xi \in \partial X$. The {\em star based at $x_0$} of $\xi$ is
$$
S^{x_0}(\xi) := \bigcap\limits_{V \in \mathcal{V}_\xi} \overline{H(V,0)},
$$
where closures are taken in $\overline X$. A priori, this definition could depend on the basepoint $x_0$, so the {\em star} of $\xi$ is defined to be
$$
S(\xi) := \overline{\bigcup\limits_{C\geq 0}\bigcap\limits_{V \in \mathcal{V}_\xi} \overline{H(V,C)}}.
$$

The combinatorially defined {\em star-distance} on $\partial X$ is given by setting $d_\star$ to
be the maximal distance function satisfying
\begin{align*}
d_\star(\xi,\eta)=0 &\iff \xi = \eta,\\
d_\star(\xi,\eta)=1& \iff \eta\in \Star(\xi) ~\hbox{or}~ \xi\in \Star(\eta).
\end{align*}

{\bf Examples}
\begin{enumerate}
\item If $X=\H^n$, then
$\Star(\xi)=\{\xi\}$ for all $\xi\in\binf(X)\cong \bhor(X) \cong S^{n-1}$.  Let us call points whose star is a singleton the {\em star hyperbolic points} of the boundary.
In this case, as in every boundary with all hyperbolic points, the star-diameter is infinite.
\item The stars in $\binf(\R^n)\cong \bhor(\R^n) \cong S^{n-1}$ are closed hemispheres centered at $\xi$.
Here the star-diameter is two.
\item As Karlsson shows, the stars in CAT(0) spaces are the balls of radius $\pi/2$ in the Tits angular metric on $\partial_\infty X$.
(Note this generalizes both of the previous examples.)
\end{enumerate}

The generality of the construction of stars makes them a powerful tool; stars are defined in such a way as to make them basepoint independent for any Hausdorff bordification of a complete metric space. The cost of this generality is difficulty deriving properties.
For instance, symmetry of star-membership has been an open question until recently.
In a new preprint \cite{jones2020asymmetry}, Jones and Kelsey settle the question negatively by exhibiting points $\alpha$ and $\beta$ in the horofunction boundary of a Diestel-Leader graph such that $\beta \in \Star(\alpha)$, but $\alpha \not\in \Star(\beta)$. Jones--Kelsey also give a sufficient condition for star membership.  
We strengthen that to a necessary and sufficient condition as follows.

\begin{lemc}[Sequence criterion] Suppose $\overline X = X \cup \partial X$ is compact and first countable. Then $\eta\in \Star(\xi)$ if and only if for
every neighborhood $U$ of $\eta$ in $\overline X$, there are sequences
$x_n\to \xi$, $y_n\to U$ and a constant $C\ge 0$ such that
\begin{equation} d(y_n,x_n)\le d(y_n,x_0) + C.   \label{dist-ineq} \tag{$\star$}
\end{equation}

In particular, if there exist $C\ge 0$ and  sequences $x_n\to \xi$ and $y_n\to\eta$
as in \eqref{dist-ineq},
then $\eta\in \Star(\xi)$.
\end{lemc}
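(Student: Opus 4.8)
The plan is to unwind the definition $\Star(\xi)=\overline{\bigcup_{C\ge 0} A_C}$, where I write $A_C:=\bigcap_{V\in\mathcal V_\xi}\overline{H(V,C)}$ and read $d(z,V)=\inf\{d(z,v):v\in V\cap X\}$ for $z\in X$, so that each halfspace $H(V,C)$ is a subset of $X$ while its closure is taken in $\overline X$. Under this reading, $\eta\in\Star(\xi)$ says exactly that every neighborhood of $\eta$ meets $\bigcup_{C}A_C$. I would prove the two implications of the stated equivalence separately and then read off the ``in particular'' clause as the special case in which the witnessing sequences may be chosen independently of $U$.

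For the forward implication, suppose $\eta\in\Star(\xi)$ and fix a neighborhood $U$ of $\eta$. Then $U$ meets $\bigcup_C A_C$, so I can pick $C\ge 0$ and a point $w\in U\cap A_C$. By first countability choose nested neighborhood bases $V_1\supseteq V_2\supseteq\cdots$ of $\xi$ and $W_1\supseteq W_2\supseteq\cdots$ of $w$. Since $w\in\overline{H(V_n,C)}$ for every $n$, the open set $W_n$ meets $H(V_n,C)$; choosing $y_n\in H(V_n,C)\cap W_n$ produces a single sequence with $y_n\to w\in U$ (so $y_n$ lies eventually in $U$) and $d(y_n,V_n)\le d(y_n,x_0)+C$. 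Because the infimum defining $d(y_n,V_n)$ need not be attained, I would pick $x_n\in V_n\cap X$ with $d(y_n,x_n)\le d(y_n,V_n)+1/n$; then $x_n\to\xi$ as $V_n$ shrinks to $\xi$, and $d(y_n,x_n)\le d(y_n,x_0)+(C+1)$, which is \eqref{dist-ineq} with constant $C+1$.

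For the converse I would use compactness. Given a neighborhood $W$ of $\eta$, use that a compact Hausdorff space is regular to choose an open $U$ with $\eta\in U\subseteq\overline U\subseteq W$, and apply the hypothesis to $U$ to obtain $x_n\to\xi$, $y_n$ eventually in $U$, and $C$ satisfying \eqref{dist-ineq}. Passing to a convergent subsequence $y_{n_k}\to y^\ast$ gives $y^\ast\in\overline U$. For each $V\in\mathcal V_\xi$ we have $x_n\in V$ for large $n$, whence $d(y_n,V)\le d(y_n,x_n)\le d(y_n,x_0)+C$, i.e.\ $y_n\in H(V,C)$ eventually; letting $k\to\infty$ shows $y^\ast\in\overline{H(V,C)}$. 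As $V$ was arbitrary, $y^\ast\in A_C$, so $W$ meets $\bigcup_C A_C$. Since $W$ was arbitrary, $\eta\in\overline{\bigcup_C A_C}=\Star(\xi)$. The ``in particular'' clause is then immediate: if $x_n\to\xi$ and $y_n\to\eta$ satisfy \eqref{dist-ineq}, then for every $U$ the tail of $y_n$ lies in $U$, so the same sequences verify the criterion and $\eta\in\Star(\xi)$.

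I expect the main obstacle to be the forward direction's passage from the family $\{\overline{H(V_n,C)}\}_n$ to one honest sequence $y_n$, that is, the diagonal choice that simultaneously forces $y_n\to w$ and $y_n\in H(V_n,C)$ while retaining the single constant $C$ inherited from $w\in A_C$. The non-attainment of the defining infimum, absorbed into the constant $C+1$, and the $\overline U$-versus-$U$ gap in the converse, handled by regularity of $\overline X$, are the two bookkeeping points I would be most careful to get right.
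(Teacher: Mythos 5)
Your proof is correct, and it shares the paper's basic strategy---unwind the definition of $\Star(\xi)$ as the closure of $\bigcup_{C}\bigcap_{V}\overline{H(V,C)}$, then use compactness together with first countability for the converse---but your decomposition differs in ways that actually tighten the written argument. In the forward direction, the paper first approximates $\eta$ by boundary points $\eta_i\in\bigcap_k\overline{H(W_k,C_i)}$ and then takes the double-indexed diagonal $y_n=y_{i,n,n}$, whose convergence into $U$ does not follow automatically from the termwise convergence $y_{i,n,k}\to\eta_i$ for each fixed $k$ (one must choose tails against a nested basis); your version sidesteps this by selecting a single point $w\in U\cap\bigcap_V\overline{H(V,C)}$ (which need not lie in $\partial X$, and for your argument need not) and extracting one sequence $y_n\to w$ with $y_n\in H(V_n,C)$. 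You also make explicit a point the paper glosses: since $d(y,V)$ is an infimum that need not be attained, the witnessing point $x_n\in V_n$ only satisfies the bound up to slack, which you absorb into the constant $C+1$---harmless, as the statement allows the constant to depend on $U$. In the converse, the paper argues that $y_{n,i}$ escapes every bounded set so that its subsequential limit lies in $\partial X$; under the paper's literal definition of the star as a closure in $\overline X$ this step is unnecessary, and you correctly omit it: your limit $y^\ast$ may lie in $X$, yet $y^\ast\in\bigcap_V\overline{H(V,C)}$ still witnesses that $W$ meets the pre-closure set, giving $\eta\in\Star(\xi)$. Your use of regularity of the compact Hausdorff $\overline X$ to interpose $\eta\in U\subseteq\overline U\subseteq W$ also cleanly resolves the closure gap that the paper elides when it places its limit point in $\partial X\cap U_i$ rather than $\partial X\cap\overline{U_i}$. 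One point worth stating explicitly in your write-up: extracting the convergent subsequence $y_{n_k}\to y^\ast$ uses compactness to obtain a cluster point and first countability to upgrade it to a subsequential limit; both hypotheses are available, but only compactness is cited.
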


\begin{proof}
Let $W_k$ be a neighborhood basis of $\xi$.
Then $\eta\in \Star(\xi)$ is equivalent to the existence of a sequence of
points in the boundary $\eta_i\to\eta$ and associated constants $C_i$
so that
$$\eta_i \in \bigcap_k \overline{H^{x_0}(W_k,C_i)},$$  So for all $k$, $\eta_i \in
\overline{H^{x_0}(W_k,C_i)}$, which means that for each $k$ there is a sequence
$y_{i,n,k} \in H^{x_0}(W_k,C_i)$ with $y_{i,n,k} \to \eta_i$ as $n\to\infty$.
But that means that for every $k$ there is a point $x_{i,n,k}\in W_k$
such that
$$d(y_{i,n,k}\ , \  x_{i,n,k}) \le d(y_{i,n,k}\ , \ x_0) + C_i.$$
Given a neighborhood $U$ of $\eta$, choose and fix a sufficiently
large value of $i$ so that $\eta_i\in U$.
Then let $x_n=x_{i,n,n}$ and $y_n=y_{i,n,n}$.  By construction,
$x_n\to \xi$, $y_n\to U$, and
$d(y_n,x_n)\le d(y_n,x_0) + C$ for $C=C_i$.

For the other direction, let us assume sequences exist as above and prove that $\eta\in \Star(\xi)$. 
Again, let $W_k$ be a neighborhood basis at $\xi$ and let $U_i$ be a neighborhood basis at $\eta$. 
The hypothesis says that for every $i$, there are sequences $y_{n,i}\to U_i$, $x_{n,i}\to \xi$, and there is a constant
$C_i$ such that $y_{n,i}\in H^{x_0}(x_{n,i},C_i)$ for all $n$. Observe that for any fixed $i$, as $n\to \infty$, $y_{n,i}$ must go to infinity, i.e., leave all bounded sets in $X$. Indeed, if $y_{n,i}$ were bounded, the distance $d(y_{n,i}, x_0)$ would be bounded above by a number $M_i>0$, and, therefore, $d(y_{n,i}, x_{n,i})$ would also be bounded above by $M_i+C_i$. We know, however, that as $n\to\infty$,  the sequence $x_{n,i} \to \xi \in \partial X$, resulting in a contradiction.

Since $\overline X$ is compact, for a fixed $i$, the sequence $y_{n,i}$ must have a limit point in $\overline{U_i}$. In particular, since the sequence leaves all bounded sets, $y_{n,i}$ has a subsequence that converges to a point in $\partial X \cap U_i$. Pass to this subsequence, and call this limit point $\eta_i$. For any $k$ and for $n$ sufficiently large, we have $x_{n,i} \in W_k$. So for $n$ large enough, $y_{n,i} \in H^{x_0}(x_{n,i}, C_i)$ implies that $y_{n,i} \in H^{x_0}(W_k, C_i)$. Therefore we have $\eta_i \in \overline{H^{x_0}(W_k, C_i)}$ for all $k$. This is true for arbitrary $U_i$, so we can find a sequence of points $\eta_i \in \partial X$ such that
\[
\{\eta_i\} \subseteq \bigcup\limits_{C\geq 0}\bigcap\limits_{k} \overline{H^{x_0}(W_k,C)}.
\]

There is a subsequence of $\{\eta_i\}$  which converges to $\eta$ by construction, and so we have
\[\eta \in \Star(\xi) = \overline{\bigcup\limits_{C\geq 0}\bigcap\limits_{k} \overline{H^{x_0}(W_k,C)}}.  \qedhere \] 

\end{proof}

\begin{lemma}[Semicontinuity of stars]
Suppose the bordification $\overline X = X \cup \partial X$ is a metrizable space. If $\eta_n\in \Star(\xi_n)$ and $\eta_n\to\eta$, $\xi_n\to\xi$, then $\eta\in \Star(\xi)$.
\label{semicont}
\end{lemma}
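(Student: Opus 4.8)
The plan is to work directly from the definition $\Star(\xi)=\overline{\bigcup_{C\ge 0}A_C(\xi)}$, where I abbreviate $A_C(\zeta):=\bigcap_{V\in\mathcal V_\zeta}\overline{H^{x_0}(V,C)}$, and to run everything sequentially, which is legitimate since a metrizable space is first countable (and compact in the proper setting under consideration, so that Lemma C and limit-point extractions are available). Fix a metric $\rho$ inducing the topology on $\overline X$. The engine of the argument is an elementary \emph{fixed-level} semicontinuity statement: for each fixed $C\ge 0$ the set $\{(\zeta,p):p\in A_C(\zeta)\}$ is closed in $\overline X\times\overline X$. To see this, suppose $p_n\in A_C(\xi_n)$ with $p_n\to p$ and $\xi_n\to\xi$, and let $V$ be any open neighborhood of $\xi$. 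Since $\xi_n\to\xi$ we have $\xi_n\in V$ for all large $n$, so $V$ is then also a neighborhood of $\xi_n$ and hence $p_n\in\overline{H^{x_0}(V,C)}$; as this halfspace closure is a closed set and $p_n\to p$, we get $p\in\overline{H^{x_0}(V,C)}$. Since $V$ was arbitrary, $p\in A_C(\xi)$.

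With this in hand I would reduce to controlling a single constant. For each $n$, the hypothesis $\eta_n\in\Star(\xi_n)=\overline{\bigcup_C A_C(\xi_n)}$ lets me choose a finite level $C_n$ and a point $p_n\in A_{C_n}(\xi_n)$ with $\rho(p_n,\eta_n)<1/n$; then $p_n\to\eta$ because $\rho(p_n,\eta)\le 1/n+\rho(\eta_n,\eta)\to 0$. Suppose first that the levels are bounded along a subsequence, say $C_{n_k}\le C^*$. By monotonicity $A_{C_{n_k}}(\xi_{n_k})\subseteq A_{C^*}(\xi_{n_k})$, so $p_{n_k}\in A_{C^*}(\xi_{n_k})$ with $p_{n_k}\to\eta$ and $\xi_{n_k}\to\xi$; the fixed-level statement with $C=C^*$ then yields $\eta\in A_{C^*}(\xi)\subseteq\Star(\xi)$, as desired. (Equivalently, one could expand the halfspace closures into interior sequences and invoke the sufficient condition of Lemma C with the single constant $C^*$.)

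The main obstacle is the remaining possibility that the levels $C_n$ blow up, with no bounded subsequence. This is a genuine difficulty rather than a bookkeeping nuisance: pushing the approximation toward the true limit $\xi$ forces us to draw from ever larger $n$, and with $C_n\to\infty$ the construction only certifies that $\eta$ lies in the weaker set $\bigcap_{V\in\mathcal V_\xi}\overline{\bigcup_{C}\overline{H^{x_0}(V,C)}}$, in which the union over $C$ has been pulled inside the intersection over neighborhoods. That set can be strictly larger than $\Star(\xi)$, since the star demands one finite $C$ that works simultaneously for \emph{all} $V\in\mathcal V_\xi$. To close this gap I would argue, via the inequality $\rho(\eta,A_C(\xi))\le\liminf_n\rho(\eta_n,A_C(\xi_n))$ coming from the fixed-level statement, that for every $\delta>0$ there is a \emph{fixed} finite level $C$ with $\rho(\eta_n,A_C(\xi_n))<\delta$ for infinitely many $n$; producing such a uniform-in-$n$ level is where the real work lies. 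I expect this to require holding the target neighborhood of $\eta$ at a fixed scale (rather than shrinking it) and then using compactness of $\overline X$ to extract a bounded-level subsequence---the metric counterpart of the lower semicontinuity of the Tits metric that makes stars closed in the CAT(0) examples. Letting $\delta=1/j\to 0$ and passing to the closure in the definition of $\Star(\xi)$ would then finish the proof.
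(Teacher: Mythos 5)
Your fixed-level argument is correct and is, in substance, the same route the paper takes: the paper likewise feeds $\eta_n\in\Star(\xi_n)$ through Lemma C with the metric balls $U^n_k=B_{1/k}(\eta_n)$ and then diagonalizes, and your closedness claim (if $p_n\in A_C(\xi_n)$, $p_n\to p$, $\xi_n\to\xi$, then $p\in A_C(\xi)$, because any open $V\ni\xi$ eventually contains $\xi_n$, so eventually $p_n\in\overline{H^{x_0}(V,C)}$, which is closed) is a cleaner packaging of the paper's diagonal. The difference is what happens to the constants. When the paper applies Lemma C to the pair $(\xi_n,\eta_n)$ it names the resulting constant $C_k$, as if it depended only on the neighborhood index $k$, and at the end it runs the diagonal $x_n=x_{n,n,K}$, $y_n=y_{n,n,K}$ with the single constant $C=C_K$. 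Read literally, one application of Lemma C per $n$ produces a constant $C_{n,K}$, so the paper's final inequality tacitly assumes exactly the uniformity in $n$ that you isolate as the crux; the paper's proof is essentially your bounded-subsequence case with the unbounded case elided by notation. (Both you and the paper also quietly use compactness of $\overline X$, via Lemma C, beyond the stated metrizability hypothesis.)

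As a standalone proof, however, your proposal is incomplete, and your own diagnosis of where it fails is accurate: when $C_n\to\infty$ your construction certifies only that $\eta\in\bigcap_{V}\overline{\bigcup_{C}\overline{H^{x_0}(V,C)}}$, and this is in fact vacuous rather than merely weaker, since $H^{x_0}(V,C)=X$ as soon as $C\ge d(x_0,V)$, making that set all of $\overline X$. Your closing sketch does not repair this: compactness extracts convergent subsequences of \emph{points}, but nothing in the hypotheses bounds the \emph{levels} $C_n$, and the inequality $\rho(\eta,A_C(\xi))\le\liminf_n\rho(\eta_n,A_C(\xi_n))$ is a consequence of the fixed-level statement, not a mechanism for producing a uniform $C$ --- for each fixed finite $C$ the right-hand side may stay bounded away from $0$. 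So what you have actually proved is the lemma under the extra hypothesis that the constants witnessing $\eta_n\in\Star(\xi_n)$ can be chosen bounded along a subsequence. It is worth noting that this bounded-level version is all the paper ever uses: in the proof of Theorem A, the memberships $b_n\sdot P_n\in\Star(a_n\sdot P_n)$ come from Theorem~\ref{multi}, where the sequence criterion is verified with the single constant $2\cc$ supplied by Minsky's theorem, so your argument suffices for that application; but to match the lemma as stated you must either justify the uniform choice of constant or flag it as a hypothesis, since the paper's own write-up does not supply the justification.
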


\begin{proof}
First, pass to a subsequence of $\eta_n$, relabeling indices as needed, such that $\eta_n \in B_{1/n}(\eta)$ for all $n>0$. Let $U_k^n$ be a neighborhood basis for $\eta_n$. In particular, take $U^n_k$ to be the metric balls $B_{1/k}(\eta_n)$.  Then since $\eta_n\in \Star(\xi_n)$, 
we know by Lemma C that for fixed $n$ and any $k$, there is a $C_k\ge 0$ and sequences $x_{i,n,k}$ and
$y_{i,n,k}$, tending to $\xi_n$ and $U_k^n$ respectively as $i\to\infty$, satisfying $$d(y_{i,n,k}\ , \  x_{i,n,k}) \le d(y_{i,n,k}\ , \ x_0) + C_k.$$

For every neighborhood $U$ of $\eta$, there is a radius $R>0$ such that $B_R(\eta) \subseteq U$. Note that by the triangle inequality $U^n_k = B_{\frac1k}(\eta_n) \subseteq B_{\frac1n + \frac1k}(\eta)$. Therefore, there exist constants $N$ and $K$ satisfying $\frac1N + \frac1K < R$ for which $n\geq N, k\geq K \implies U_k^n \subset U$.  Now let
$x_n=x_{n,n,K}$ and $y_n=y_{n,n,K}$.  We have $x_n\to \xi$ and $y_n\to U$, while $d(y_n,x_n)\le d(y_n,x_0)+C$ for $C=C_K$. 
\end{proof}

\section{Stars in sup metrics}
In this section, we will take a digression into the geometry of sup metrics, which appear in the thin parts of \Teich space, to be described below. In particular, we will study the horofunction boundary of $X=(\R^2,\sup).$
In doing so, we will recover the results of Gutierrez \cite{gutierrezhorofunction} on the horofunction boundary, but through a more geometric proof. We will then explore the stars in this sup metric, which are of independent interest but also provide intuition for the proof of the main theorem on stars in \Teich space.

\subsection{Directional sequences and geodesics}
Here, we will use the term {\em geodesic ray}  in a metric space $X$ to refer either to isometric  
maps $[0,\infty) \to X$
or to the image of such a map in the space $X$, which are characterized by the betweenness relation:  for any three points $p,q,r$ in order along the curve,
$d(p,q)+d(q,r)=d(p,r)$. 
Since the sup metric on $\R^2$ is translation-invariant, to understand all geodesics, it suffices to describe geodesic rays based at the origin. 
It quickly follows from the betweenness description that sup geodesics are parametrized by arclength by either their $x$ coordinate, $y$ coordinate, or the negative of one of these.  
Accordingly,  a  (unit-speed) {\em northerly geodesic}  can be written in the form $\left( x(t),\ t\right)$,
and similarly for easterly, southerly, and westerly, which covers all possibilities.

We can define a {\em northerly sequence} $\{z_n= (x_n, y_n)\}$  of points in $\R^2$ by 
the property that $y_n>y_{n-1}$ and 
$$y_{n+1} -y_n \geq |x_{n+1}-x_n|,$$
or in other words, the  the northward displacement dominates the east and west displacement.
(See Figure~\ref{fig:northerly}.)
Note that northerly geodesics are characterized by all exiting sequences being northerly sequences, and that every northerly sequence can be interpolated to a northerly geodesic.
Calling a curve of one of these four types {\em directional}, we have observed that {\em every sup geodesic is directional}.

\begin{figure}[ht]
\begin{tikzpicture}[scale=.5]
\draw [<->] (-5,0)--(5,0);
\draw [<->] (0,-2)--(0,6);
\filldraw (0,0) circle (0.1) node[below right] {$z_0$};
\draw [opacity=.5] (-5,5)--(0,0)--(5,5);
\filldraw (1.3,2) circle (0.1) node[below left] {$z_1$};
\draw [opacity=.5] (-2.2,5.5)--(1.3,2)--(4.8,5.5);
\filldraw (-.2,3.8) circle (0.1) node[below left] {$z_2$};
\draw [opacity=.5] (-1.9,5.5)--(-.2,3.8)--(1.5,5.5);

\begin{scope}[xshift=14cm]
\draw [<->] (-5,0)--(5,0);
\draw [<->] (0,-2)--(0,6);
\draw [->,green,ultra thick,opacity=.5] (0,0) -- (0,2)--(5,7);
\draw [->,blue, ultra thick,opacity=.5] (0,0) -- (0,3)--(-4,7);

\end{scope}
\end{tikzpicture}
\caption{To the left, a northerly sequence.  Such a sequence can be linearly interpolated to form a northerly geodesic.  To the right, the northerly geodesics $\alpha^{\rm NE}_2$
and $\alpha^{\rm NW}_3$.}\label{fig:northerly}
\end{figure}
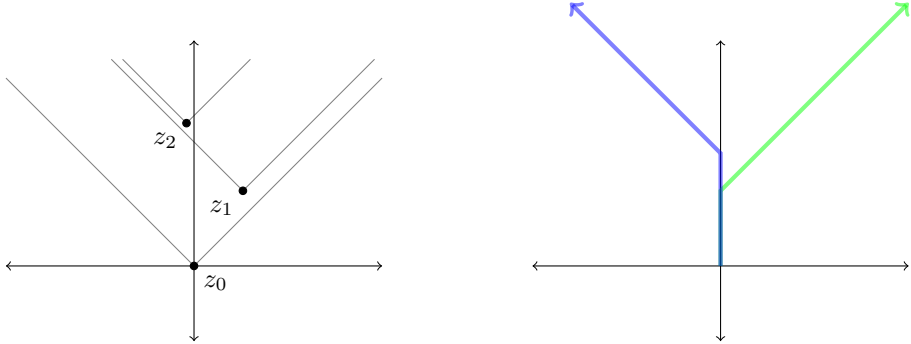

We now define several families of geodesics that we will use below.
For $m\ge 0$, let $$\alpha^{\rm NE}_m(t)=(\max(0,t-m),t).$$  These first travel north  for $m$ units, then diagonally northeast.  There are eight families defined similarly, each starting 
in an axial direction (N/S/E/W) for $m\ge 0$ units, then turning $\pi/4$ and continuing.  
(So for instance, $\alpha^{\rm EN}_3$ is an easterly geodesic, and $\alpha^{\rm EN}_0=\alpha^{\rm NE}_0$ is both easterly and northerly.)
Besides these, we also define the pure axial geodesics $\alpha^{\rm N}$, $\alpha^{\rm S}$, $\alpha^{\rm E}$, $\alpha^{\rm W}$, which can be thought of as $m\to\infty$ limits from the other families.

%

\subsection{Boundaries of sup metrics}
Recall that the visual boundary
is the set of geodesic rays originating from a basepoint, up to the equivalence relation identifying any two 
rays with bounded Hausdorff distance. The boundary equipped with the compact-open topology. 
It turns out that the visual boundary is not very suitable for sup metrics, as it is easily seen to
have a collapsed topology:  in particular, it is not Hausdorff.  
Indeed, the rays $\alpha^{\rm NE}_m$ and $\alpha^{\rm EN}_m$ are all mutually equivalent to the straight straight northeast geodesic $\alpha^{\rm NE}_0$, but converge to $\alpha^{\rm N}$ and $\alpha^{\rm E}$ respectively as $m\to\infty$.
This means that any open neighborhood of North contains Northwest, and with similar constructions one deduces that the topology on the boundary is trivial.  (See \cite{kitz-rath} for a similar
argument in the visual boundary of $\Z^2$.)

The horofunction boundary is often a better choice for metric spaces outside of the negative curvature setting.  It is defined by using the distance function to embed a metric space 
in the space of continuous functions up to constants, then passing to the topological boundary.  One way to make this precise is to declare that a sequence of points $z_n$
in a metric space $X$ converges to a horofunction $h$ via the formula
$$h(z)=\lim_{n\to\infty} d(z_n,z)-d(z_n,z_0),$$
if that limit exists.  
One easily verifies, via the triangle inequality, that a sequence of points following a geodesic always converges to a horofunction. Such horofunctions are called Busemann functions, or 
Busemann points in the boundary. Denote by $h^*$ and $h^*_m$ the horofunctions which are the limits of the families of geodesics $\alpha^*$ and $\alpha^*_m$, respectively, defined above.
In the other direction, it is not always the case that every horofunction is induced in this way.
In the case of the sup metric, 
 a result of Karlsson, Metz, and Noskov \cite{karlsson-horoballs} tells us that all horofunctions in $\bhor(X)$ are Busemann functions.
We will give a complete description of the boundary in terms of the representative geodesics defined in the last section. We note that our description of the horofunction boundary agrees with that given by Gutierrez in \cite{gutierrezhorofunction}.




\begin{thmd} The horofunction boundary of $(\R^2,\sup)$ is homeomorphic to the circle $S^1$, and parametrized by the normal forms $\alpha^*$ for and $\alpha^*_m$
for $m\ge 0$, as shown in Figure~\ref{fig:horosup}.  \end{thmd}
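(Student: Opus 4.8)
The plan is to explicitly compute all the horofunctions of $(\R^2,\sup)$ by evaluating the Busemann limits of the normal-form geodesics $\alpha^*$ and $\alpha^*_m$, show that these are pairwise distinct and exhaust the boundary, and then verify that the resulting parametrization is a homeomorphism onto $S^1$. Since the earlier-cited result of Karlsson--Metz--Noskov guarantees that \emph{every} horofunction is a Busemann function, and since every sup geodesic ray is directional (hence equivalent under reparametrization to one of the eight $\alpha^*_m$ families or the four axial $\alpha^*$), I expect the list of candidate horofunctions to be complete once I account for all geodesic rays based at the origin.

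**First I would** carry out the explicit limit computation. For each normal form, I write $h(z) = \lim_{t\to\infty} \sup\bigl(|z_1 - a(t)|, |z_2 - b(t)|\bigr) - \sup(|a(t)|,|b(t)|)$ where $(a(t),b(t))$ is the geodesic. For the axial geodesic $\alpha^{\rm N}(t)=(0,t)$, the northward coordinate dominates for large $t$, so the $\sup$ reduces to the second coordinate and one gets $h^{\rm N}(z)=-z_2$; similarly for the other three axial directions. For the bent geodesics such as $\alpha^{\rm NE}_m(t)=(\max(0,t-m),t)$, the diagonal tail means both coordinates grow at unit speed but offset by $m$, and the $\sup$ picks out the appropriate coordinate depending on where $z$ sits relative to the shifted diagonal; the limit should come out to a piecewise-linear function that interpolates between the two axial extremes and records the offset $m$. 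This is the routine-calculation step, and I would organize it as a short table or case analysis rather than grinding each limit separately.

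**Next I would** establish injectivity and continuity of the parametrization. Injectivity amounts to checking that distinct normal forms give distinct functions on $\R^2$, which follows by evaluating each horofunction at a few well-chosen test points (e.g. points far out along each axis) to read off both the ``direction'' and the offset parameter $m$. For the topology, I would describe convergence in the boundary via uniform convergence on compact sets: as $m\to\infty$ within the $\alpha^{\rm NE}_m$ family the horofunctions $h^{\rm NE}_m$ converge uniformly on compacts to $h^{\rm N}$, and as $m\to 0$ they converge to $h^{\rm NE}_0$, so the axial points are exactly the limits that glue the eight bent families into a single closed loop. Arranging the twelve pieces (four axial, eight bent-family arcs) in cyclic order N, NE, E, SE, S, SW, W, NW and back to N exhibits the boundary as a topological circle.

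**The hard part will be** pinning down the topology rigorously rather than just heuristically: I must verify that the map from the parameter circle into $\bhor(X)$ (with its compact-open/uniform-on-compacts topology) is a \emph{homeomorphism}, i.e. that it is both continuous and has continuous inverse, and in particular that no ``extra'' identifications or limit points sneak in at the axial junctions. Because $\bhor(X)$ is compact Hausdorff (as a closed subset of $C(X)$ modulo constants with the topology of uniform convergence on compacts) and the parameter space $S^1$ is compact, a continuous bijection will automatically be a homeomorphism; so the real work is confirming \emph{bijectivity} — that the KMN completeness result leaves no horofunctions unaccounted for and that the offset parameter $m$ together with the eight directional labels is a faithful coordinate. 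I would close by noting that this matches Gutierrez's description in \cite{gutierrezhorofunction}, which serves as an independent consistency check.
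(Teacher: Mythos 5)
Your overall architecture (compute the Busemann limits of the normal forms, check injectivity via test points, then upgrade a continuous bijection from the compact parameter circle to a homeomorphism) is sound, and the compact-Hausdorff bijection trick is a slightly more careful route to the topology statement than the paper's, which just observes that perturbing $m$ by $\epsilon$ changes $h^*_m$ by at most $\epsilon$ uniformly. But there is a genuine gap at exactly the point you flag as the hard part, and the mechanism you propose for closing it fails. You reduce completeness to the claim that every sup geodesic ray, being directional, is ``equivalent under reparametrization to one of the eight $\alpha^*_m$ families or the four axial $\alpha^*$.'' That is false: a northerly ray based at the origin is \emph{any} curve $\left(x(t),\,t\right)$ with $x$ a $1$-Lipschitz function, for instance $\gamma(t)=\left(\log(1+t),\,t\right)$ or $\left(\sqrt{t},\,t\right)$, and these are not reparametrizations of --- nor even at bounded Hausdorff distance from --- any normal form. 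So Karlsson--Metz--Noskov (every horofunction is Busemann) plus directionality does not by itself confine the boundary to your list; you must compute the Busemann limit of an \emph{arbitrary} directional ray, and that computation is the actual content of the paper's proof, not a routine afterthought.

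The missing argument, which is how the paper proceeds, is a dichotomy: along a northerly geodesic $\left(x(t),t\right)$, both $t-x(t)$ and $t+x(t)$ are non-decreasing (since $x$ is $1$-Lipschitz), hence $y_n-|x_n|$ is monotone along any exiting sequence. Either it is unbounded, in which case the limit function is $-y=h^{\rm N}$ regardless of the horizontal wandering (so $\left(\log(1+t),t\right)$ converges to $h^{\rm N}$ even though it is not asymptotic to $\alpha^{\rm N}$ in the visual sense), or it converges to some finite $m\ge 0$, in which case monotonicity forces the sequence eventually into a single quadrant and the limit computes to $\max(-x-m,-y)=h^{\rm NE}_m$ or its mirror image. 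This dichotomy --- not reparametrization equivalence --- is what shows the normal forms exhaust the boundary. A further small point: Karlsson--Metz--Noskov produces Busemann functions of rays based \emph{anywhere}, not just at $x_0$, so you should either run the dichotomy for directional sequences with arbitrary starting point (as the paper's ``geodesic sequence'' formulation effectively does) or verify directly that the normalized limit of a translated normal form is again some $h^*_m$; it is, but it needs saying. With those repairs your outline coincides with the paper's proof.
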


\begin{proof}

Suppose that $\{z_n = (x_n,y_n)\}$ is a northerly geodesic sequence. For each of the other directions, similar arguments apply. Since $z_n$ is northerly, $y_n \geq |x_n|$ for all $n$, and so there are two possible scenarios. As $n \to \infty$, either (1) the difference $y_n - |x_n|$ is unbounded, or (2) there exists $m \geq 0$ such that $y_n - |x_n| <m$ for all $n$.

\mybox{Case 1:} Suppose $y_n - |x_n|$ is unbounded, and let $z = (x,y)$ be a point in $\R^2$.
If the limit exists, the horofunction corresponding to the sequence $\{(x_n,y_n)\}$ is
\[
h(z) = \lim\limits_{n\to\infty} \sup(|x - x_n|, |y-y_n|) - \sup(|x_n|,|y_n|).
\]
No matter what the input values $x$ and $y$ are, since $y_n - |x_n|$ is unbounded, we get
\[
h(z) = \lim\limits_{n\to\infty}|y-y_n| - |y_n|= -y,
\]
which is the horofunction $h^{\rm N}(z)$ coming from the north geodesic $\alpha^{\rm N}$.

\medskip

\mybox{Case 2:}  Suppose $y_n - |x_n|$ is bounded, and let $m = \sup\{y_n - |x_n|\}=\lim (y_n - |x_n|)$. Since the difference $y_n - |x_n|$ increases monotonically, the sup and limit are equal. Observe that the boundedness implies that for sufficiently large $n$ either $z_n$ is in the first quadrant or the second quadrant. Indeed, going back and forth between quadrants would necessarily increase the value of $m$. Assume $z_n$ eventually stays in the first quadrant, so eventually $y_n - |x_n| = y_n - x_n \to m$.

As $n \to \infty$ both $x_n$ and $y_n$ are growing arbitrarily large, so
$$h(z) = \lim_{n\to\infty}\sup(|x-x_n|, |y - y_n|) - \sup(|x_n|, |y_n|)= \lim_{n\to\infty}\sup(x_n-x, y_n-y) - y_n.$$

If $y \geq x + m$, we see 
$h(z) = \lim\limits_{n\to\infty} x_n - x - y_n = -x -m$.
On the other hand, if $y < x + m$, the sup picks out $y_n - y$, and $h(z) = -y$.
Thus,
$$h(z) = \lim\limits_{n\to\infty} \begin {cases}
-x -m, & y\geq x + m\\
-y, & y <x+m
\end{cases}
= \max (-x-m, -y).$$
This is exactly equal to the horofunction $h^{\rm NE}_m(z)$ coming from the geodesic $\alpha^{\rm NE}_m$.

\medskip

Because the topology is inherited from the compact-open topology on continuous functions $C(X)$,
the $\alpha^*_m$ interpolate between the diagonal and axial directions as $m$ varies from $0$ to $\infty$. 
(Note from the explicit expression for $h^*_m$ that two horofunctions in the same sector with $m$ perturbed by $\epsilon$ gives output that differs by no more than $\epsilon$ on the whole space.) 
Thus the horofunction  boundary is  homeomorphic to $S^1$.
\end{proof}

\begin{figure}[ht]
\begin{tikzpicture}[scale=.5]

\node at (5.4,4.4) [above ] {\fbox{$\max(-x,-y)$}};
\node at (-5.4,4.4) [above ] {\fbox{$\max(x,-y)$}};
\node at (6.8,-3.5) [below ] {\fbox{$\max(-x,y)$}};
\node at (-6.8,-3.5) [below ] {\fbox{$\max(x,y)$}};
\node at (0,5) [above ] {\fbox{$-y$}};
\node at (0,-5) [below ] {\fbox{$y$}};
\node at (5,0) [right ] {\fbox{$-x$}};
\node at (-5,0) [left ] {\fbox{$x$}};
\draw (0,0) circle (4);
\draw [->] (2.2,7) --(1.8,4.1);
\node at (4,7) [above] {\fbox{$\max(-x-m,-y)$}};
\draw [->] (-2.2,7) --(-1.8,4.1);
\node at (-4,7) [above] {\fbox{$\max(x-m,-y)$}};
\draw [->] (5.2,2) --(4.1,1.8);
\node at (5,2) [right] {\fbox{$\max(-x,-y-m)$}};
\draw [->] (5.2,-2) --(4.1,-1.8);
\node at (5,-2) [right] {\fbox{$\max(-x,y-m)$}};
\draw [->] (-5.2,2) --(-4.1,1.8);
\node at (-5,2) [left] {\fbox{$\max(x,-y-m)$}};
\draw [->] (-5.2,-2) --(-4.1,-1.8);
\node at (-5,-2) [left] {\fbox{$\max(x,y-m)$}};
\draw [->] (2.2,-5.4) --(1.8,-4.1);
\node at (4,-7) [above] {\fbox{$\max(-x-m,y)$}};
\draw [->] (-2.2,-5.4) --(-1.8,-4.1);
\node at (-4,-7) [above] {\fbox{$\max(x-m,y)$}};
\foreach \x in {0,90,180,270}
{\begin{scope}[rotate=\x]
\clip (0,0) circle (4.2);
\draw [blue,line width=3,opacity=.5] (0,0)--(4,4);
\draw [blue,line width=3,opacity=.5] (0,.5)--++(4,4);
\draw [blue,line width=3,opacity=.5] (0,1)--++(4,4);
\draw [blue,line width=3,opacity=.5] (.5,0)--++(4,4);
\draw [blue,line width=3,opacity=.5] (1,0)--++(4,4);
\end{scope}}
\draw [blue!50!white,line width=5,opacity=1] (0,-4.2)--(0,4.2);
\draw [blue!50!white,line width=5,opacity=1] (-4.2,0)--(4.2,0);
\draw [<->] (-5,0)--(5,0);
\draw [<->] (0,-5)--(0,5);
\draw (-4.2,-4.2)--(4.2,4.2);
\draw (-4.2,4.2)--(4.2,-4.2);
\end{tikzpicture}
\caption{Every horofunction of $(\R^2,\sup)$ is of one of these types, for some $m> 0$.
Representative  geodesics are shown in blue.}\label{fig:horosup}
\end{figure}
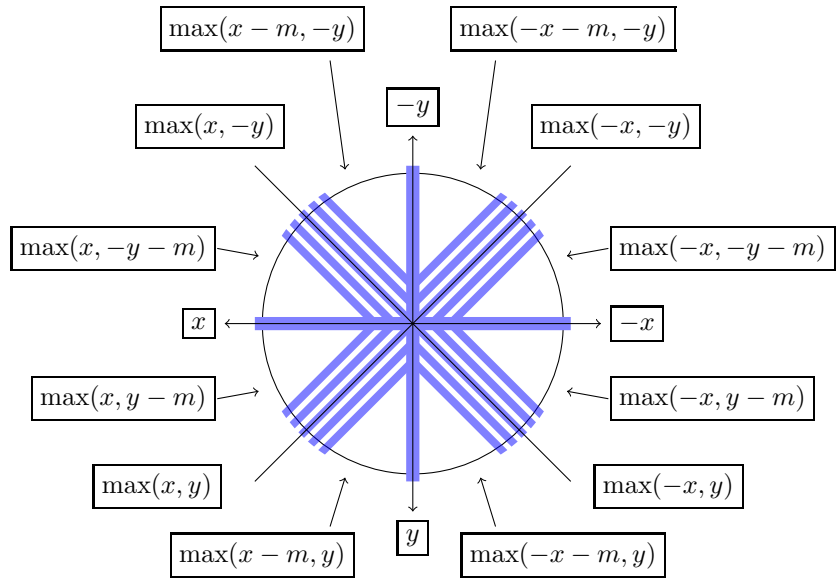


\subsection{Stars in sup metrics}
To understand stars, first consider halfspaces in $(\R^2, \sup)$. 
As shown in Figure~\ref{sup-fig}, the halfspaces separating two points along most straight rays contain half of that circle, while the those separating points in the four axial directions 
contain three-quarters of that circle. We'll see that this phenomenon of ``being able to see more" from axial directions extends to stars.

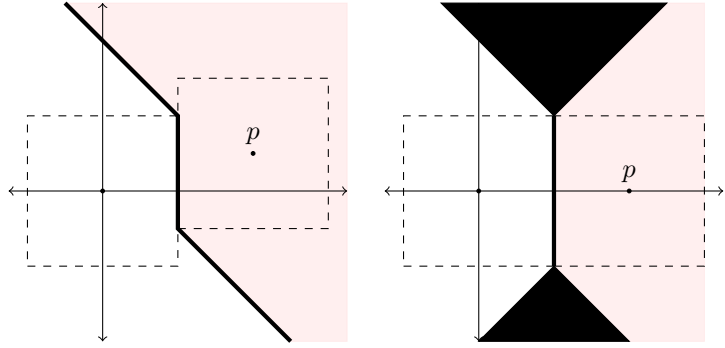
\begin{figure}[H]

\begin{tikzpicture}[scale=.5]
\filldraw [pink!50,opacity=.50] (-1,5)--(2,2)--(2,-1)--(5,-4)--(6.5,-4)--(6.5,5)--cycle;
\draw [<->] (-2.5,0)--(6.5,0);
\draw [<->] (0,-4)--(0,5);
\filldraw (0,0) circle (0.05);
\filldraw (4,1) circle (0.05) node [above] {$p$};
\draw [dashed] (-2,-2) rectangle (2,2);
\draw [dashed] (2,-1) rectangle (6,3);
\draw [ultra thick] (-1,5)--(2,2)--(2,-1)--(5,-4);

\begin{scope}[xshift=10cm]
\filldraw [pink!50,opacity=.50] (2,2)--(2,-2)--(4,-4)--(6,-4)--(6,5)--(5,5)--cycle;
\draw [<->] (-2.5,0)--(6.5,0);
\draw [<->] (0,-4)--(0,5);
\filldraw (0,0) circle (0.05);
\filldraw (4,0) circle (0.05) node [above] {$p$};
\draw [dashed] (-2,-2) rectangle (2,2);
\draw [dashed] (2,-2) rectangle (6,2);
\draw [ultra thick] (2,2)--(2,-2);
\filldraw  (5,5)--(2,2)--(-1,5)--cycle;
\filldraw  (4,-4)--(2,-2)--(0,-4)--cycle;
\end{scope}

\end{tikzpicture}

\caption{In the first figure, the locus of points equidistant from the origin and the point $p$ contains two
rays and a line segment, drawn here with heavy lines.  In the second
case, the locus of points equidistant from the origin and $p=(b,0)$ contains a segment and two infinite cones as pictured.
The dotted squares in both cases are metric spheres centered at $0$ and $p$.  The
strict halfspaces $H(p,0)$ in $(\R^2,\sup)$ are shown in pink.\label{sup-fig}}
\end{figure}

\begin{lemma}\label{nonaxial-lemma}
For nonaxial directions, a sequence $z_n = (x_n, y_n)$ converges to $h^*_m$, $m\geq 0$ if and only if the sequence converges to the geodesic $\alpha^*_m$. 
\end{lemma}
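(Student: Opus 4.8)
The plan is to reduce to one representative direction and then prove the two implications separately, with the forward implication (convergence of the horofunction forcing the sequence onto the geodesic) as the real content. Since $(\R^2,\sup)$ carries the full dihedral symmetry group permuting its eight nonaxial families, it suffices to treat the northeast case. So I will take the target to be $h^{\rm NE}_m(z)=\max(-x-m,-y)$ and read ``$z_n$ converges to $\alpha^{\rm NE}_m$'' concretely as: $z_n$ leaves every compact set and $d(z_n,\operatorname{im}\alpha^{\rm NE}_m)\to 0$. Computing the sup-distance from $(x_n,y_n)$ to the line $y=x+m$ as $\tfrac12|y_n-x_n-m|$, this condition is equivalent to $x_n\to+\infty$, $y_n\to+\infty$, and $y_n-x_n\to m$, which is the form I will actually work with.

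The backward implication is essentially Case 2 of Theorem D, but liberated from the assumption that $\{z_n\}$ be a geodesic sequence. The computation there used only that $x_n,y_n\to+\infty$ and $y_n-x_n\to m$, never the monotonicity of a northerly geodesic sequence; rerunning it verbatim for any sequence meeting these three conditions gives $\lim_n\left[\sup(|x-x_n|,|y-y_n|)-\sup(|x_n|,|y_n|)\right]=\max(-x-m,-y)$ pointwise, that is, $z_n\to h^{\rm NE}_m$.

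For the forward implication I would restrict the pre-limit functions $h_n(z)=\sup(|x-x_n|,|y-y_n|)-\sup(|x_n|,|y_n|)$ to the coordinate axes. The function $x\mapsto h_n(x,0)$ is a convex piecewise-linear ``valley'': it is constant at height $\min(0,|y_n|-|x_n|)$ on $[\,x_n-|y_n|,\ x_n+|y_n|\,]$ and has slope $\pm 1$ outside, with break-points exactly at $x_n\pm|y_n|$. By hypothesis it converges pointwise to $x\mapsto\max(-x-m,0)$, a \emph{one-sided} valley, flat at $0$ on $[-m,\infty)$ and of slope $-1$ on $(-\infty,-m]$. Matching the two forces the right break-point to run to $+\infty$ (else some large fixed $x$ sees the rising right arm of $h_n$ and the limit cannot be $0$) and the left break-point together with the left intercept to $-m$; that is $x_n+|y_n|\to+\infty$ and $x_n-|y_n|\to -m$, whence $x_n\to+\infty$ and $|y_n|\to\infty$. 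The identical argument applied to $y\mapsto h_n(0,y)\to\max(-m,-y)$ gives $y_n-|x_n|\to m$ and $y_n\to+\infty$; since now $x_n>0$ eventually we have $|x_n|=x_n$, so $y_n-x_n\to m$. These are exactly the three conditions characterizing convergence to $\alpha^{\rm NE}_m$.

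The main obstacle is the break-point bookkeeping in the last paragraph: upgrading \emph{pointwise} convergence of the valley functions to convergence of their break-points and floor. I would handle this by a subsequence argument — if a break-point failed to converge to the claimed value (or escaped to $\pm\infty$), I would exhibit a fixed test point lying on the wrong linear piece of $h_n$ for infinitely many $n$, contradicting the known pointwise limit. This step is also precisely where the \emph{nonaxial} hypothesis is used: for an axial target such as $h^{\rm N}=-y$, the limiting valley on the $x$-axis is the two-sided flat function $\equiv 0$, which imposes no constraint on $x_n-|y_n|$, so both break-points may run off to infinity and the sequence need not track $\alpha^{\rm N}$ — exactly the degeneracy exhibited by the Case 1 sequences of Theorem D.
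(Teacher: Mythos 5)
Your proposal is correct, and its skeleton matches the paper's: reduce to the northeast family by symmetry, read convergence to $\alpha^{\rm NE}_m$ as $x_n,y_n\to+\infty$ with $y_n-x_n\to m$, and obtain the backward implication by rerunning the Case~2 computation from Theorem~D, which (as you note) never uses that $\{z_n\}$ is a geodesic sequence. Where you genuinely depart from the paper is in the forward implication. The paper works directly in two variables: it asserts that producing the correct signs in the limit forces $x_n,y_n\to\infty$, and then matches the branch locus $y\geq x+m$ of $h^{\rm NE}_m$ against the locus $y\geq x+(y_n-x_n)$ where the sup picks out $|x_n-x|$, concluding $y_n-x_n\to m$ in a few lines. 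Your device of restricting to the coordinate axes --- viewing $h_n(\cdot,0)$ and $h_n(0,\cdot)$ as convex piecewise-linear valleys with break-points $x_n\pm|y_n|$ and $y_n\pm|x_n|$ and floors $\min(0,|y_n|-|x_n|)$ and $\min(0,|x_n|-|y_n|)$ --- converts that two-variable matching into one-variable break-point convergence, and your flagged bookkeeping step closes cleanly: the $h_n$ are $1$-Lipschitz, so pointwise convergence is automatically locally uniform, and the test-point subsequence argument you describe pins the break-points and floor as claimed. Your route buys rigor that the paper elides (the paper's ``to obtain these signs we first need $x_n,y_n\to\infty$'' is asserted, whereas your break-point analysis derives it, via $x_n=\tfrac12\bigl[(x_n-|y_n|)+(x_n+|y_n|)\bigr]\to\infty$ and its $y$-axis counterpart), and it also localizes exactly where nonaxiality enters: your remark that an axial limit such as $-y$ restricts on the $x$-axis to the two-sided flat valley, constraining neither break-point, is precisely the degeneracy realized by the paper's Case~1 sequences. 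What the paper's terser route buys in exchange is brevity, with the same display serving both to compute the limit and to read off the branch-locus equivalence.
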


For example, $x_n \to h^{\rm NE}_m$ if and only if $x_n, y_n \to \infty$ and $y_n-x_n \to m$.

\begin{proof}
Without loss of generality, we will examine the $\alpha^{NE}_m$ case.  
Suppose a sequence $z_n= (x_n, y_n)$ converges to  $h^{\rm NE}_m$. Then
\begin{align*}
\lim_{n\to \infty} d(z_n, z) - d(z_n, z_0) &= \lim_{n \to \infty}\sup (|x_n - x|, |y_n -y|) - \sup(|x_n|, |y_n|)\\
& = h^{\rm NE}_m(x)
 = \begin{cases}
-x -m, & y \geq x + m\\
-y, & y< x +m.
\end{cases}
\end{align*}
To obtain these signs for $x$ and $y$ in the limit, we first need $x_n,y_n \to \infty$.
Furthermore,  $-x$ appears in the limit  iff $y \geq x + m$, which occurs iff the first sup in the difference  picks out $|x_n - x|$. 
We have 
$$|x_n -x| = x_n - x \geq y_n - y = |y_n - y|,$$
which can be rewritten $y \geq x + (y_n - x_n)$ and is equivalent to $y \geq x + m$. Therefore, $(y_n - x_n) \to m$. 
\end{proof}

\begin{thme}
The stars of axial boundary points $h^*$ are closed hemispheres, while stars of nonaxial boundary points $h^*_m$ are closed axial quadrants. 
\end{thme}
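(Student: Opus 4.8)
The plan is to apply the sequence criterion of Lemma C, which is available because $\overline X = \R^2\cup S^1$ is compact and metrizable (hence first countable). Fix the basepoint $x_0$ at the origin. The dihedral symmetry group $D_4$ of $(\R^2,\sup)$ acts on the boundary circle permuting the normal forms of Theorem D, so it suffices to compute two representative stars: $\Star(h^{\rm N})$ for an axial point and $\Star(h^{\rm NE}_m)$ for a nonaxial point. I will show $\Star(h^{\rm N})$ is the closed hemisphere running from $h^{\rm W}$ to $h^{\rm E}$ through $h^{\rm N}$ (the directions with nonnegative vertical component), and that $\Star(h^{\rm NE}_m)$ is the closed quadrant from $h^{\rm E}$ to $h^{\rm N}$ (the closed first-quadrant directions). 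Throughout I write $x_n=(a_n,b_n)$ and $y_n=(u_n,v_n)$ and use the convergence descriptions from the proof of Theorem D and from Lemma~\ref{nonaxial-lemma}: a sequence converges to $h^{\rm N}$ iff $v_n\to\infty$ with $v_n-|u_n|\to\infty$, and to $h^{\rm NE}_m$ iff $u_n,v_n\to\infty$ with $v_n-u_n\to m$, with analogous statements in the other sectors.

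For the containment (lower bound) I use the sufficient half of Lemma C: it is enough to produce, for each target $\eta$, a single constant $C$ and sequences $x_n\to\xi$, $y_n\to\eta$ obeying $d(y_n,x_n)\le d(y_n,x_0)+C$. For $\xi=h^{\rm N}$ I take $x_n=(0,n)$ and check directly that $y_n=(0,2n)\to h^{\rm N}$, $y_n=(n,n+m)\to h^{\rm NE}_m$, $y_n=(n+m,n)\to h^{\rm EN}_m$, and $y_n=(2n,0)\to h^{\rm E}$ all satisfy the inequality with a fixed $C$; reflecting across the $y$-axis fills in the complementary octant and yields the whole upper hemisphere. For $\xi=h^{\rm NE}_m$ I take $x_n=(n,n+m)$ and verify the inequality for representatives of every direction in the closed first quadrant (e.g. $y_n=(0,2n)\to h^{\rm N}$, $y_n=(2n,0)\to h^{\rm E}$, and $y_n=(n,n+m')$ for the interior families). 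These are all short sup-metric computations.

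The real work is the non-membership (upper bound) direction, via the contrapositive of the necessary half of Lemma C: for $\eta$ outside the claimed set I must exhibit one neighborhood $U$ of $\eta$ admitting no admissible sequences. The mechanism is a coordinate obstruction. If $\xi=h^{\rm N}$ then every $x_n\to\xi$ has $b_n\to+\infty$, so whenever $y_n$ heads into the open lower semicircle its vertical coordinate has $v_n\to-\infty$ and $d(y_n,x_n)\ge |v_n-b_n|=b_n+|v_n|$, whereas $d(y_n,x_0)=\max(|u_n|,|v_n|)$. Choosing $U$ small enough that every $y_n\to U$ satisfies $|u_n|\le|v_n|+O(1)$ (which holds off the axial directions $h^{\rm E},h^{\rm W}$), the inequality would force $b_n=O(1)$, a contradiction. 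The analogous first-coordinate obstruction, using $a_n\to+\infty$ for $x_n\to h^{\rm NE}_m$, rules out the directions with $u_n\to-\infty$, and the second-coordinate obstruction rules out those with $v_n\to-\infty$; together these exhaust the complement of the closed first quadrant.

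The main obstacle is precisely this last step, and specifically its behaviour at the endpoints of the arcs. The axial directions $h^{\rm E},h^{\rm W}$ lie on the boundary of the hemisphere and must be included, so the obstruction must fail there: these are exactly the directions where one coordinate dominates ($|u_n|\gg|v_n|$), rescuing the inequality, which is why $U$ must be chosen to encode the magnitude-balance condition $|u_n|\le|v_n|+O(1)$, and why this condition must be shown to hold uniformly for all $y_n\to U$ and to persist against an arbitrary $x_n\to\xi$ (using only that the distinguished coordinate of $x_n$ escapes to $+\infty$). Getting this uniform neighborhood choice and the borderline octants right is the crux; once it is in place, combining the two directions shows that $\Star(h^{\rm N})$ is the closed hemisphere and $\Star(h^{\rm NE}_m)$ the closed axial quadrant, and the $D_4$-symmetry transports these to all remaining boundary points.
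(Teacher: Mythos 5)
Your proposal is correct and takes essentially the same approach as the paper: both directions go through the sequence criterion of Lemma C, with explicit sequences for star membership and a coordinate-domination estimate on a carefully chosen neighborhood for exclusion (the paper's strip $U = \{(x,y) \mid y > 2m,\ -x+m-1 < y < -x+m+1\}$ plays exactly the role of your magnitude-balance condition $|u_n|\le |v_n|+O(1)$), together with symmetry to reduce to two representative stars. The only cosmetic differences are that you obtain axial points such as $h^{\rm E}\in\Star(h^{\rm N})$ by a direct sequence computation where the paper sends $m\to\infty$ and invokes closedness of stars, and that you run the obstruction uniformly over the whole complementary arc where the paper verifies the representative family $h^{\rm NW}_m$ and appeals to symmetry for the rest.
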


For example,
$$\Star(h^{\rm E}) = \{h^{\rm N}, h_m^{\rm{NE}}, h_m^{\rm{EN}}, h^{\rm E}, h_m^{\rm ES}, h^{\rm{SE}}_m, h^{\rm S} \mid m \geq 0\}$$
and for any $m\ge 0$,
$$\Star(h^{\rm NE}_{m}) = \{h^{\rm N}, h_\ell^{\rm NE}, h_\ell^{\rm{EN}}, h^{\rm{E}}\mid \ell\geq 0\}.$$

\begin{proof}
Without loss of generality, we will find the stars of $h^{\rm E}$ and then $h^{\rm NE}_m$ 
Let $z_0 = (0,0)$.
First, we  show that for all $m \geq 0$, $h^{\rm EN}_m$ and $h^{\rm NE}_m$ are in $\Star(h^{\rm E})$. Fix the eastern sequence  $z_n = (n,0)$, east-north sequence $w_n = (n, n-m)$, and north-east sequence $w_n'= (n-m, n)$. Then $d(w_n, z_0)~=~d(w_n', z_0)~=~n$. We have
$$d(w_n, z_n) = n-m \leq n ~\quad\hbox{\rm and}~\quad d(w_n', z_n)  = n \leq n.$$
By the sequence criterion, $h^{\rm EN}_m$ and $h^{\rm NE}_m$ are in $\Star(h^{\rm E})$, and since stars are closed, we get 
$h^{\rm N}\in \Star(h^{\rm E})$ by sending $m\to\infty$. Symmetry also gives us the corresponding east-south, south-east, and southern horofunctions.

Now to show that no other boundary points are contained in $\Star(h^{\rm E})$, it suffices to show that $h^{\rm NW}_m \not\in \Star(h^{\rm E})$ for any $m > 0$. Let $z_n = (x_n, y_n)$ be a sequence converging to $h^{\rm E}$. That is, $x_n \to +\infty$ and $x_n - |y_n|$ is unbounded. Define the neighborhood $U = \{ (x, y) \mid y > 2m,  -x +m-1 <y  < -x + m + 1\}$ of $h^{\rm NW}_m$, and suppose $w_n = (u_n, v_n) \to U$. For any $C > 0$, we must show that for $n$ large enough $d(w_n, z_n) > d(w_n, z_0) + C$. Clearly if $w_n$ stays in a bounded set, this is true, so assume $w_n$ leaves all bounded sets. Then the coordinates of $w_n$ satisfy $-u_n+m - 1 < v_n < -u_n + m +1$, and $d(w_n, z_0) < v_n < -u_n + m + 1$. For $n$ large enough, $d(w_n, z_n)  = x_n - u_n$, and as $x_n \to \infty$,
\[
d(w_n, z_n) = -u_n + x_n > -u_n + m + 1 + C > d(w_n, z_0) + C.
\]

Now we find the star of $h_{m}^{\rm NE}$. Consider $z_n = (n- m, n)$ converging to $h_{m}^{\rm NE}$ and sequences $w_n = (n-\ell, n)$ and $w_n' = (n, n-\ell)$ converging to $h_{\ell}^{\rm NE}$ and $h_{\ell}^{\rm EN}$, respectively. Clearly $d(w_n, z_0) = d(w_n',z_0) = n$, and both $d(w_n, z_n)$ and $d(w_n',z_n)$ are finite for all $n$. Thus $\Star(h_{m}^{\rm NE})$ contains the northern, north-east, east-north, and eastern horofunctions.

To show there is nothing else in the star, it suffices to show $h_{\ell}^{\rm NW}$ is not contained in $\Star(h_{m}^{\rm NE})$ for any $\ell >0$.  Let $C >0$.  As above, consider the neighborhood $U = \{ (x, y) \mid y > 2\ell, -x + \ell-1 < y < -x + \ell + 1\}$ of $h_{\ell}^{\rm NW}$, and assume $w_n$ leaves all bounded sets. Again we have $d(w_n, z_0) < v_n < -u_n + m + 1$, and Lemma \ref{nonaxial-lemma} ensures that for large enough $n$, $d(w_n, z_n) = \sup(|x_n - u_n|, |y_n - v_n|) = x_n - u_n$. As $x_n \to \infty$,
\[
d(w_n, z_n) = -u_n + x_n > -u_n + m + 1 + C > d(w_n, z_0) + C.
\]
\end{proof}

In particular, we observe that the star of $h^{\rm E}$ contains $h^{\rm N}$, and vice versa, which is what is needed to make the necessary arguments for \Teich space below.

\subsection{Generalizing to sup metrics in $\R^n$}
It is not difficult to see how these results generalize to $\R^n$. Indeed, the horofunction boundary of $\R^n$ with the sup metric is homeomorphic to $S^{n-1}$. The functions in the boundary can be parametrized in the following way. Let $\epsilon = (\epsilon_1, \ldots, \epsilon_n) \in \{-1,1\}^n$, and let $m = (m_1, \ldots, m_n) \in [0,\infty]^n$, where at least one of the $m_i$ is equal to 0. Then there is a horofunction $h$ in the boundary
\[
h(x_1,\dots,x_n) = \max_i \left\{\epsilon_i  x_i - m_i \right\},
\]
where we use the convention that, for instance,  $\max (x_1 - \infty, -x_2 - 3, x_3)=\max( -x_2 - 3, x_3)$.
The figure below shows the horofunctions reached by sequences which eventually stay in the negative orthant of $\R^3$, corresponding to $\epsilon~=~(+1,+1,+1)$. 
\begin{figure}[ht]
\begin{tikzpicture}[scale=.5]
\draw(-5, 0) -- (5, 0) -- (0, 8.66) -- cycle;
\filldraw (-5,0) circle (0.1);
\filldraw (5,0) circle (0.1);
\filldraw (0,8.66) circle (0.1);
\filldraw (2.5,4.33) circle (0.1);
\filldraw (-2.5,4.33) circle (0.1);
\filldraw (0,0) circle (0.1);
\filldraw (0,2.889) circle (0.1);
\draw [dashed] (0,0) -- (0, 8.66);
\draw [dashed] (2.5,4.33) -- (-5,0);
\draw [dashed] (-2.5,4.33) -- (5,0);

\node at (5,0) [right ] {\fbox{$y$}};
\node at (-5, 0) [left ] {\fbox{$x$}};
\node at (0, 8.66) [above ] {\fbox{$z$}};

\node at (2, 7) [right] {\fbox{$\max(x-\ell, y -m, z), \ell \geq m$}};
\draw [->] (2,7) --(1, 5.5);
\node at (-2, 7) [left] {\fbox{$\max(x-\ell, y -m, z), \ell \leq m$}};
\draw [->] (-2,7) --(-1, 5.5);

\node at (4.5, 2) [right] {\fbox{$\max(x-\ell, y, z-n), \ell \geq n$}};
\draw [->] (4.5,2) --(2.7, 3);
\node at (-4.5, 2) [left] {\fbox{$\max(x, y-m, z-n), m \geq n$}};
\draw [->] (-4.5,2) --(-2.7, 3);

\node at (-2.5, -2) [left] {\fbox{$\max(x, y-m, z-n), m \leq n$}};
\draw [->] (-3,-1.2) --(-1.7, 1);
\node at (2.5, -2) [right] {\fbox{$\max(x-\ell, y, z-n), \ell \leq n$}};
\draw [->] (3,-1.2) --(1.7, 1);

\filldraw[white] (-2.9, 1.4) rectangle (2.9, 2.6);

\node at (2.8, 4.4) [right ] {\fbox{$\max(y,z)$}};
\node at (-2.8, 4.4) [left ] {\fbox{$\max(x,z)$}};
\node at (0,0) [below ] {\fbox{$\max(x,y)$}};
\node at (0,2.889) [below ] {\fbox{$\max(x,y,z)$}};

\end{tikzpicture}
\caption{The simplex in the  horofunction boundary of $\R^3$ with the sup metric. The full boundary is an octahedron with eight such faces.}\label{fig:higher-dim-bdry}
\end{figure}
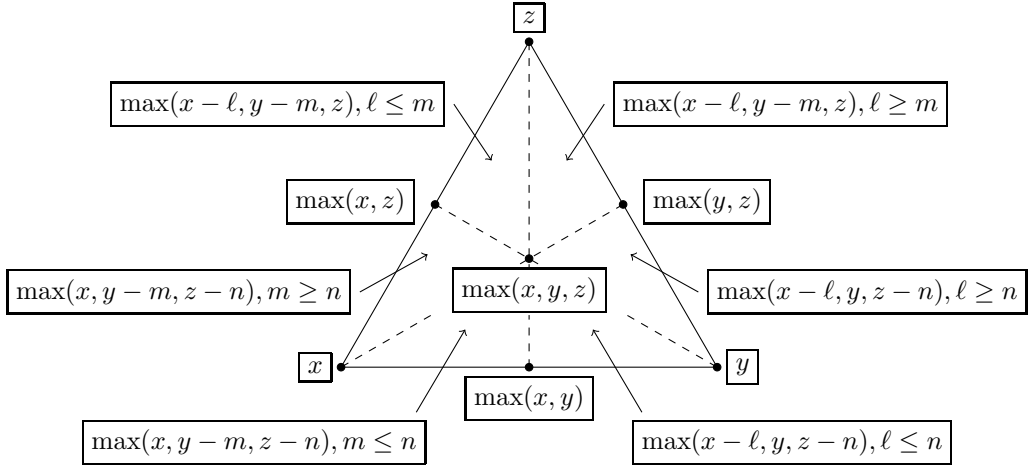

This gives the sphere boundary a simplicial structure, which helps in identifying stars of boundary points. We have parametrized the boundary $\partial_h(\R^n,\sup)$ 
as an orthoplex (i.e., co-cube) with $2^n$ simplex faces.  
If $h$ is a function in the boundary, let $\sigma(h)$ be the simplex of minimal dimension containing $h$. Then the star of $h$ is the simplicial star of $\sigma(h)$, that is, the collection of simplices which have $\sigma(h)$ as a face.
\section{\Teich stars}

We will show that the star-diameter of zero sets  it is at most one; that is,
$Z(F)\subseteq S(F)$.

The crucial element is Minsky's theorem identifying the metric on the thin parts of $\T(S)$:  Minsky finds sup metrics
inside of these thin parts, and that is enough to bound the star-diameter.

\begin{theorem}[Minsky,  \cite{minsky-product}]
Let $\Gamma$ be a set of disjoint curves on $S$.  Let $\epsilon$-$\Thin_\Gamma$ be the subset of $\T(S)$ consisting of points for which the hyperbolic length of every curve in $\Gamma$ is less than $\epsilon$. 
Let $$\epsilon\hbox{-}\Prod\nolimits_\Gamma = \bigl(  \T(S') \times \prod_{\gamma\in \Gamma} \H_\gamma^\epsilon \ , \  \sup \bigr),$$
where $\H_\gamma^\epsilon$ is the metric horoball in $\H^2$ defined by $\{z=x+iy\in \C \mid y>1/\epsilon\}$, thought of as parametrizing the Fenchel-Nielsen length and twist 
coordinates $(\ell,\tau)$ for $\gamma$
via $y=1/\ell$ and $x=\tau$.  Then for sufficiently small $\epsilon$, there is a constant $\cc$ such that
$\epsilon$-$\Thin_\Gamma$ is $(1,\cc)$-quasiisometric to $\epsilon$-$\Prod_\Gamma$.
\end{theorem}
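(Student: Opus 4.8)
The plan is to reduce everything to Kerckhoff's extremal-length formula for the Teichm\"uller metric and then to show that, on the thin part, the supremum defining that metric splits across the factors of the product, which is exactly what makes a sup metric appear up to bounded additive error.

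First I would invoke Kerckhoff's formula \cite{Kerckhoff}, expressing the Teichm\"uller distance as
$$ d_{\T}(X,Y) = \tfrac12 \log \sup_{\alpha} \frac{\ext_Y(\alpha)}{\ext_X(\alpha)}, $$
the supremum running over simple closed curves $\alpha$ (equivalently, over measured foliations). The theorem then amounts to understanding how extremal length behaves throughout $\epsilon$-$\Thin_\Gamma$. The key geometric input is the collar lemma together with extremal-length estimates in annuli: once every $\gamma\in\Gamma$ is shorter than $\epsilon$, each such $\gamma$ is surrounded by an embedded collar whose modulus is comparable to $1/\ell_\gamma$, and these collars, together with a controlled thick subsurface, decompose $S$.

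The central estimate I would establish is a product estimate for extremal length: for any curve $\alpha$, the quantity $\ext_X(\alpha)$ equals, up to a multiplicative error bounded independently of $X$ in the thin part, the maximum of a contribution detected by the thick subsurface (the extremal length computed in the collapsed surface $S'$) and, for each $\gamma$, a contribution from crossing the collar of $\gamma$. The annular contribution is governed by the modulus, hence by $\ell_\gamma$, and by how much $\alpha$ twists around $\gamma$, hence by $\tau_\gamma$; matching these two parameters to the imaginary and real parts of a point in $\H^2$ identifies the annular factor's extremal-length data with the hyperbolic metric on the horoball $\H_\gamma^\epsilon=\{y>1/\epsilon\}$ in the coordinates $(\ell,\tau)\leftrightarrow(1/y,x)$. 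With these estimates in hand the argument closes quickly: because $\ext_X(\alpha)$ is, up to bounded error, the maximum over the factors, the supremum of ratios in Kerckhoff's formula is itself the maximum of the supremal ratios taken factor by factor, since the maximizing $\alpha$ either registers a change in $\T(S')$ or winds around a single $\gamma$. Taking logarithms converts this maximum of ratios into a sup of the individual factor metrics --- the Teichm\"uller metric on $\T(S')$ and the hyperbolic metric on each $\H_\gamma^\epsilon$ --- which is precisely the sup metric defining $\epsilon$-$\Prod_\Gamma$, and the multiplicative errors collapse into a single uniform additive constant $\cc$.

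The hard part will be the uniformity: controlling $\cc$ independently of the chosen points throughout the thin part, and, relatedly, handling curves $\alpha$ that simultaneously traverse the thick subsurface and wind around several collars, so that their extremal lengths genuinely are governed by one dominant factor rather than by honest cross terms. The other delicate point is pinning down the annular estimate exactly --- in particular verifying that the twist parameter $\tau$ enters the extremal length as the real coordinate of $\H^2$, so the induced metric on the annular factor is genuinely the hyperbolic metric on the horoball and not merely comparable to it --- and it is here that the restriction to sufficiently small $\epsilon$ does its work.
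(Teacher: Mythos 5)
You should know at the outset that the paper contains no proof of this statement: it is quoted, with attribution, as Minsky's product regions theorem \cite{minsky-product} and used as a black box in the proofs of Proposition~\ref{propSCC} and Theorem~\ref{multi}. So the only meaningful comparison is with Minsky's original argument --- and your sketch is, in outline, a faithful reconstruction of that very route: Minsky deduces the product-regions statement from Kerckhoff's formula together with a two-sided extremal-length estimate on the thin part, of the shape $\ext_X(\alpha) \asymp \max\bigl(\ext_{X'}(\alpha),\ \max_{\gamma\in\Gamma} E_\gamma(\alpha)\bigr)$ with the annular contribution $E_\gamma(\alpha)$ controlled by $i(\alpha,\gamma)$, the collar modulus, and the twisting, and then an elementary lemma identifying $d_{\H^2}$ with $\tfrac12\log\sup_s$ of ratios of the quadratic forms $|s_1-s_2z|^2/\Im z$ (the torus/annulus case of Kerckhoff's formula), which is what genuinely matches the annular data to the hyperbolic metric in the coordinates $(\ell,\tau)\leftrightarrow(1/y,x)$. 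You have correctly identified all of these ingredients and correctly located where the smallness of $\epsilon$ enters (embedded collars of definite modulus, uniformity of constants).

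That said, as a proof the proposal has a real gap: the ``central estimate I would establish'' is not a step on the way to Minsky's theorem, it essentially \emph{is} Minsky's theorem --- the entire analytic content of \cite{minsky-product} lies in proving that two-sided comparison with constants uniform over the thin part, and your text asserts it rather than proves it. Relatedly, your sentence that ``the maximizing $\alpha$ either registers a change in $\T(S')$ or winds around a single $\gamma$'' only delivers the upper bound $d_\T \leq \sup_i d_i + \cc$ (the max-of-ratios inequality goes the easy way there); the lower bound $d_\T \geq d_i - \cc$ for each factor requires exhibiting, for each factor, curves whose extremal length is dominated by that factor at \emph{both} endpoints --- e.g.\ heavily twisted transversals for an annular factor --- and this is genuine work your sketch does not address. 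Finally, one of your two flagged ``delicate points'' is a red herring: since the conclusion is only a $(1,\cc)$-quasiisometry, you never need the induced metric on the annular factor to be \emph{exactly} hyperbolic; uniform multiplicative comparability of the extremal-length expressions becomes additive error after the logarithm, which is all the statement claims.
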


From now on we fix $\epsilon$ to be as short as is required in this theorem, and we write simply $\Thin_\Gamma$ and $\Prod_\Gamma$.

\begin{theorem} If $A$, $B$ are disjoint multicurves, then $B \in \Star(A)$.
\label{multi}\end{theorem}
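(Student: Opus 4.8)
The plan is to realize both multicurves as limits of pinching sequences inside a single thin part of $\T(S)$, where Minsky's theorem replaces the \Teich metric by a sup metric, and then to apply the sequence criterion of Lemma C. Concretely, I would set $\Gamma = A \cup B$, which is a collection of disjoint simple closed curves because $i(A,B)=0$. By Minsky's theorem, $\Thin_\Gamma$ is $(1,\cc)$-quasi-isometric to the sup-metric product $\Prod_\Gamma = (\T(S') \times \prod_{\gamma \in \Gamma} \H_\gamma, \sup)$, where the factor $\H_\gamma$ records the Fenchel--Nielsen data $(\tau_\gamma, 1/\ell_\gamma)$ of $\gamma$. I would fix a basepoint $x_0 \in \Thin_\Gamma$ at which every curve of $\Gamma$ has a common fixed short length and all twists vanish; since the star is basepoint-independent and the Thurston compactification is compact and metrizable, this choice costs nothing and Lemma C applies.

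Next I would construct the two sequences. Let $x_n$ be obtained from $x_0$ by sending $1/\ell_\alpha \to \infty$ for the curves $\alpha \in A$ while freezing all $B$-coordinates and the $\T(S')$-factor; define $y_n$ symmetrically by pinching the curves of $B$ and freezing those of $A$. Since all of $\Gamma$ stays $\epsilon$-short, both sequences remain in $\Thin_\Gamma$ and are governed by the sup metric. I would then verify that $x_n \to A$ and $y_n \to B$ in the Thurston boundary: the hyperbolic length of a test curve $\delta$ grows like $2\, i(\delta,\alpha)\log(1/\ell_\alpha)$ for each pinched curve by the collar lemma, so along $x_n$ the dominant, unboundedly growing part of the length functional is $\delta \mapsto i(\delta, A)$, whose projective class is exactly $A \in \PMF$, while the frozen $B$-curves contribute only a bounded amount that washes out projectively.

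The heart of the argument is the distance estimate, which is exactly the axial computation of Theorem E transplanted through Minsky. In $\Prod_\Gamma$ the three points agree in the $\T(S')$ factor and in every twist coordinate, so each pairwise distance is the supremum of the one-dimensional hyperbolic distances $|\log(y/y')|$ between heights in the $\H_\gamma$ factors. With all weights equal to one and a common pinching rate, writing $y_0$ for the frozen height and $n$ for the pinched height, one finds $d(x_n, x_0) = d(y_n, x_0) = d(x_n, y_n) = \log(n/y_0)$, since $x_n$ and $y_n$ are pinched along complementary, non-overlapping coordinates. This is precisely the phenomenon that places $h^{\rm N}$ and $h^{\rm E}$ in one another's stars in $(\R^2,\sup)$. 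Pulling this equality back through the $(1,\cc)$-quasi-isometry gives $d(y_n,x_n) \le d(y_n, x_0) + C$ with $C = 2\cc$, and the ``in particular'' form of Lemma C, with $x_n \to A$ and $y_n \to B$, yields $B \in \Star(A)$.

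The main obstacle is the identification of the boundary limits together with the asymmetry built into Lemma C. One must check carefully that pinching $A$ while keeping $B$ merely short --- rather than also pinching it --- converges to $[A]$ alone and never mixes in $[B]$. For weighted multicurves there is a further subtlety: pinching $\alpha$ to height $e^{s\, a_\alpha t}$ and $\beta$ to height $e^{r\, b_\beta t}$ makes $d(x_n,y_n)$ equal to $\max(s\, a^\ast,\, r\, b^\ast)\,t - \log y_0$, where $a^\ast,b^\ast$ are the largest weights, so the inequality $d(y_n,x_n)\le d(y_n,x_0)+C$ holds only once the two overall pinching speeds are tuned so that the $B$-sequence pinches at least as fast in sup norm (i.e.\ $s\,a^\ast \le r\,b^\ast$). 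Choosing the speeds this way preserves both projective limits while restoring the inequality; this flagging of asymmetry is consistent with the Jones--Kelsey examples. By contrast, the metric bookkeeping is routine once Minsky's theorem is in hand, and the additive quasi-isometry constant $\cc$ is absorbed without cost because Lemma C permits an arbitrary additive constant.
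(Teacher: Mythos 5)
Your proposal is correct and takes essentially the same route as the paper: pinch the curves of $A$ and $B$ inside $\Thin_{A\cup B}$, use Minsky's sup-metric model to get the equality $d(x_n',y_n')=d(y_n',x_0')$ so that the $(1,\cc)$-quasi-isometry yields $d(x_n,y_n)\le d(y_n,x_0)+2\cc$, and conclude via the sequence criterion of Lemma C. Your extra verifications --- the $\PMF$-convergence of the pinching sequences and the tuning of pinching speeds for weighted multicurves (a point the paper leaves implicit even though it later applies the theorem to $b_n\sdot P_n\in\Star(a_n\sdot P_n)$) --- go beyond what the paper writes down but do not alter the argument.
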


Let $\Gamma =\{\gamma_1,\ldots,\gamma_n\}$ be the multicurve $A\cup B$.
The orthants of $(\R^n,\sup)$ isometrically embed in $\Prod_\Gamma$,  so they embed
 with only additive distortion in $\Thin_\Gamma$. 
Since  stars
are  large in sup metrics, we will conclude that they are large in \Teich space.  We first write the case of simple closed curves.

\begin{proposition}[The curve case]\label{propSCC}
For disjoint simple closed curves $\alpha$ and $\beta$, $$\beta\in \Star(\alpha).$$
\end{proposition}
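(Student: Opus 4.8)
The plan is to pull the sup-metric computation of Theorem E back into a thin part of $\T(S)$ via Minsky's product regions theorem, and then apply the sufficient condition of Lemma C. Set $\Gamma=\{\alpha,\beta\}$; Minsky's theorem realizes $\Thin_\Gamma\subset\T(S)$ as $(1,\cc)$-quasi-isometric to $\Prod_\Gamma=(\T(S')\times\H_\alpha^\epsilon\times\H_\beta^\epsilon,\sup)$, the two horoball factors recording the length and twist of $\alpha$ and $\beta$. Freezing the twist coordinates and the $\T(S')$ factor, each vertical ray $\{1/\ell_\gamma=y\}$ in a horoball is a hyperbolic geodesic along which $\gamma$ is pinched, and reparametrizing by arclength through $t=\log(\epsilon_0/\ell_\gamma)$ for a fixed $\epsilon_0<\epsilon$ exhibits the product of these two rays as an isometrically embedded closed orthant $(\R^2_{\ge 0},\sup)\hookrightarrow\Prod_\Gamma$. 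Its two axial directions are exactly the directions pinching $\alpha$ and pinching $\beta$, which I claim converge in the Thurston compactification to $[\alpha]$ and $[\beta]$ respectively.

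Granting this, I would import the sequences witnessing $h^{\rm N}\in\Star(h^{\rm E})$ from Theorem E. Take the orthant origin to be the point $x_0$ with $\ell_\alpha=\ell_\beta=\epsilon_0$, so that the entire positive orthant lies in $\Thin_\Gamma$, and in orthant coordinates set $x_n=(n,0)$ and $y_n=(0,n)$. Then $x_n$ pinches $\alpha$ and $y_n$ pinches $\beta$, so $x_n\to[\alpha]$ and $y_n\to[\beta]$, while the sup metric $d_\infty$ gives $d_\infty(y_n,x_n)=\sup(n,n)=n=d_\infty(y_n,x_0)$. Because the orthant sits isometrically in $\Prod_\Gamma$, which is $(1,\cc)$-quasi-isometric to $\Thin_\Gamma$ in the Teichm\"uller metric $d$, we obtain $d(y_n,x_n)\le n+\cc$ and $d(y_n,x_0)\ge n-\cc$, and therefore $d(y_n,x_n)\le d(y_n,x_0)+C$ with $C=2\cc$.

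Finally, the Thurston compactification $\overline{\T(S)}=\T(S)\cup\PMF$ is a compact metrizable ball, hence first countable, and stars are basepoint-independent, so I may use $x_0$ as basepoint and feed $x_n\to[\alpha]$, $y_n\to[\beta]$, and $C=2\cc$ into the sufficient condition of Lemma C to conclude $\beta\in\Star(\alpha)$. The step I expect to require the most care is the identification of the pinching directions with $[\alpha]$ and $[\beta]$: using the collar lemma one must verify that pinching $\alpha$ while holding $\ell_\beta=\epsilon_0$ bounded forces the projectivized length functions $\ell_{x_n}$ to converge to $i(\alpha,\cdot)$, so that the disjoint-but-still-short curve $\beta$ does not contaminate the limit, and symmetrically for $\beta$. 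Granting that, the isometric embedding of the orthant and the tracking of the additive constant through the quasi-isometry are routine.
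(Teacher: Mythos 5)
Your proposal is correct and takes essentially the same approach as the paper: Minsky's product-region theorem, pinching sequences along vertical horoball geodesics with the twist and $\T(S')$ coordinates frozen (your orthant points $x_n=(n,0)$, $y_n=(0,n)$ are exactly the paper's $x_n'=(\sigma,e^n i,ki)$ and $y_n'=(\sigma,ki,e^n i)$), the sup-metric equality $d(y_n,x_n)=d(y_n,x_0)$ pushed through the $(1,\cc)$-quasi-isometry to get $C=2\cc$, and then the sufficient direction of Lemma C. The only difference is that you spell out the convergence $x_n\to[\alpha]$, $y_n\to[\beta]$ in $\PMF$ via projectivized length functions, a step the paper asserts without comment.
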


\begin{proof}  Let $S'=S_{\alpha,\beta}$ be the subsurface of $S$ obtained by cutting open $S$ along $\alpha$ and $\beta$.
Let $\sigma$ be an arbitrary basepoint in $\T(S')$. 

Let $k$ be a constant chosen large enough that $x_0':=(\sigma,ki, ki)$ is in $\Prod_{\{\alpha,\beta\}}$.
We let $x_n'= (\sigma, e^n i, ki)$ and let $y_n'=(\sigma, ki, e^n i)$.
Then the corresponding points $x_n, y_n$ in $\T(S)$ move through $\Thin_{\{\alpha,\beta\}}$ along paths that pinch $\alpha$ and $\beta$, respectively,
while leaving other Fenchel-Nielsen coordinates fixed.  But then $x_n\to \alpha\in \PMF$, $y_n\to \beta\in \PMF$. 

$$d(x_n',y_n') = d(y_n', x_0') \implies d(x_n,y_n)\le d(y_n,x_0) + 2\cc.$$

Now the sequence criterion (Lemma C) completes the proof.
\end{proof}

\begin{proof}[Proof of Theorem~\ref{multi}]
Similarly let $k$ be large enough that $(\sigma,ki,\ldots,ki)\in\Prod_\Gamma$, and $x_n'$ have $e^ni$ in the factors corresponding to curves from $A$ while
$y_n'$ has $e^ni$ in the (not necessarily distinct) factors corresponding to curves from $B$.  The rest of the proof is the same.
\end{proof}

A  result of Lenzhen and Masur gives a very useful description of zero-sets for minimal foliations.

\begin{theorem}[Lenzhen-Masur, \cite{len-mas}]
Suppose $F$ and $G$ are minimal foliations with $i(F,G)=0$. 
Then there exists a simultaneous approximation by multicurves:
there is a sequence of maximal multicurves $\{P_n\}$ and there exist weight vectors
$a_n,b_n\in \R^k$ such that $a_n\sdot P_n \to F$ in $PMF$, while $b_n\sdot P_n \to G$.
\label{approx}\end{theorem}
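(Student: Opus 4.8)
The plan is to reduce the problem to a single topological foliation that carries two different transverse measures, and then to build one sequence of pants decompositions whose reweightings approximate both measures at once. First I would fix the topology underlying $F$ and $G$. Since ``minimal'' here means every leaf is dense, both foliations fill $S$, and a filling foliation has positive intersection number with every measured foliation not topologically equivalent to it. Hence $i(F,G)=0$ forces $F$ and $G$ to share a common underlying singular foliation $\mathcal F$, differing only in their transverse measures $\mu_F$ and $\mu_G$. By the ergodic decomposition for measured foliations there are finitely many mutually singular ergodic transverse measures $\nu_1,\dots,\nu_p$ on $\mathcal F$, with $p\le k$, so that $\mu_F=\sum_j s_j\nu_j$ and $\mu_G=\sum_j t_j\nu_j$ for nonnegative coefficients. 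This is the step that genuinely uses both the minimality hypothesis and $i(F,G)=0$.

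Next I would realize $\mathcal F$ by a common maximal recurrent train track $\tau$ and pass to a splitting sequence $\tau=\tau_0\succ\tau_1\succ\cdots$ directed by $\mathcal F$. Every $\tau_n$ carries $\mathcal F$, hence carries $F$, $G$, and each $\nu_j$, and the nested cones $V(\tau_n)$ of carried measures shrink to $\mathrm{cone}(\nu_1,\dots,\nu_p)$; the fact that this limit is not a single ray is exactly the non-unique ergodicity of $\mathcal F$. Integral weight vectors in $V(\tau_n)$ correspond to multicurves carried by $\tau_n$, so by density of rational rays I can choose, for each $j$ and each $n$, a carried simple closed curve $\gamma_{j,n}$ with $[\gamma_{j,n}]\to[\nu_j]$ in $\PMF$.

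The decisive step, and the main obstacle, is to arrange that for each fixed $n$ the approximants $\gamma_{1,n},\dots,\gamma_{p,n}$ are simultaneously embedded as a disjoint family, so that they extend to a maximal multicurve $P_n=\{\gamma_{1,n},\dots,\gamma_{p,n},\dots,\gamma_{k,n}\}$. Granting this, the theorem is immediate: letting $a_n$ record the coefficients $s_j$ on the curves $\gamma_{j,n}$ and $0$ on the completing curves gives $a_n\sdot P_n\to\sum_j s_j\nu_j=F$, and recording the $t_j$ instead gives $b_n\sdot P_n\to G$; the completing curves carry relatively vanishing weight and do not disturb either projective limit. The difficulty is precisely the disjointness, since carried curves for distinct ergodic measures need not be disjoint --- vertex cycles of a train track routinely intersect. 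I would extract disjointness from the self-similar tower structure behind non-unique ergodicity: the distinct ergodic measures are concentrated on different subfamilies of long, nearly parallel leaves that appear at different depths of the splitting, and curves tracking these subfamilies at a common deep stage $\tau_n$ can be routed monotonically along $\tau_n$ and hence drawn disjointly. Making this routing precise, while keeping each projective limit correct and controlling the weights on the completing curves, is where the real work lies.
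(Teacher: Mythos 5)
Your opening reduction is sound and is consistent with the paper's framing: for $F$ minimal, the zero set $\ZZ(F)$ is exactly the simplex of projective transverse measures on the topological foliation underlying $F$, so $i(F,G)=0$ does place $\mu_F$ and $\mu_G$ on a common underlying foliation, and the ergodic decomposition $\mu_F=\sum_j s_j\nu_j$, $\mu_G=\sum_j t_j\nu_j$ is available (the paper itself invokes this simplex structure in Section 2). But your argument stops exactly where the content of the theorem begins, and you say so yourself. Nothing in the proposal produces, at a common stage $\tau_n$ of the splitting sequence, pairwise \emph{disjoint} carried curves $\gamma_{1,n},\dots,\gamma_{p,n}$ tracking the distinct ergodic rays. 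As you correctly observe, curves carried by a common train track typically intersect (vertex cycles do), and the claim that curves tracking different ergodic measures ``can be routed monotonically along $\tau_n$ and hence drawn disjointly'' is not a valid general principle: two distinct carried curves whose weight vectors overlap on a common branch generically have positive geometric intersection number, and the ``self-similar tower structure behind non-unique ergodicity'' that you invoke to fix this is precisely the unproven hard part. A correct reduction plus an acknowledged restatement of the core difficulty (simultaneous disjoint rational approximation of all ergodic vertices of the simplex) is a gap, not a proof.

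The missing idea --- and the one Lenzhen--Masur actually use, as the paper sketches immediately after the statement --- is to obtain disjointness for free from hyperbolic geometry rather than to engineer it combinatorially on a track. In the nontrivial case $G\neq F$, the foliation $F$ is minimal but not uniquely ergodic, so by Masur's criterion the Teichm\"uller geodesic with vertical foliation $F$ eventually leaves every compact set of moduli space; hence at a sequence of times some curves are arbitrarily short, and completing to a short (Bers) pants decomposition produces maximal multicurves $P_n$ whose components are simultaneously short on a single hyperbolic surface and therefore pairwise disjoint by the collar lemma. This is the disjointness mechanism your scheme lacks. These short curves cut $S$ into subsurfaces on which the supports of $F$ and $G$ concentrate more and more, and length/intersection-number weights then give $a_n\sdot P_n\to F$ and $b_n\sdot P_n\to G$ (when $F$ is uniquely ergodic the statement is trivial, since then $G=F$). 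To salvage your train-track route you would need an analogous structural statement --- for instance, that deep in the splitting sequence the distinct ergodic measures are carried on disjoint subtracks --- but proving that is essentially reproving the Lenzhen--Masur subsurface-concentration structure, not a shortcut around it.
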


The idea of their proof is that minimal foliations describe geodesics whose projections to moduli space diverge
(leave every compact set); thus some curve is very short at every sufficiently large time.  These short curves
decompose $S$ into pairs of subsurfaces dividing the support of $F$ and $G$ with more and more concentration.

The following proposition is then a direct corollary of the Lenzhen-Masur theorem.

\begin{proposition}[Simultaneous approximations]\label{simult}
For any pair of foliations $F,G$ with $i(F,G)=0$, there is a simultaneous approximation by multicurves.
\end{proposition}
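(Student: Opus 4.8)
The plan is to reduce the general statement to the Lenzhen--Masur theorem (Theorem~\ref{approx}) by invoking the canonical decomposition of measured foliations and treating the surface region by region. Recall that any $F\in\MF$ admits a decomposition, unique up to Whitehead equivalence, into components supported on disjoint essential subsurfaces: some components are weighted simple closed curves (annular pieces), and the remaining components are minimal foliations that fill their supporting subsurfaces. Write the minimal part of $F$ as foliations $F_1,\dots,F_p$ filling disjoint subsurfaces $R_1,\dots,R_p$, do likewise for $G$, and collect all the weighted-curve components of $F$ and of $G$ into a single family of disjoint simple closed curves $c_1,\dots,c_q$.

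First I would use the hypothesis $i(F,G)=0$ to pin down how these supports interact. If a minimal component of $F$ and one of $G$ fill subsurfaces whose interiors overlap without one containing the other, their boundaries cross and the two filling foliations must intersect; and if one filling subsurface properly contains the other, the larger filling foliation meets the smaller (essential) component with positive intersection number. Either configuration forces $i(F,G)>0$. Hence each filling subsurface of $F$ is either equal to, or has interior disjoint from, each filling subsurface of $G$, and by the same reasoning every curve $c_j$ is disjoint from every filling subsurface it does not bound. So $S$ breaks into regions with pairwise disjoint interiors of three types: (i) a subsurface $R$ carrying a minimal component of both $F$ and $G$; (ii) a subsurface carrying a minimal component of exactly one of them; and (iii) an annulus around one of the curves $c_j$.

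On a region of type (i) the two restrictions are minimal filling foliations of $R$ with zero intersection number, so Theorem~\ref{approx} applied to $R$ supplies maximal multicurves $P^R_n$ of $R$ and weight vectors $a^R_n,b^R_n$ with $a^R_n\sdot P^R_n\to F|_R$ and $b^R_n\sdot P^R_n\to G|_R$. On a region of type (ii), say one carrying only a minimal component $F|_R$, I would use Thurston's density of weighted multicurves in $\PMF$ to choose multicurves $P^R_n$ of $R$ with $a^R_n\sdot P^R_n\to F|_R$, and set $b^R_n=0$. On a region of type (iii) the weighted curve is already a multicurve, so I take $P^R_n=c_j$ with the two weight vectors recording the (fixed) weights of $c_j$ in $F$ and in $G$, one of which may be zero.

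Finally I would assemble. Since all the regions have disjoint interiors, the boundary curves $\partial R$ together with the per-region multicurves $P^R_n$ are pairwise disjoint, so their union is a multicurve in $S$; complete it by a fixed choice of disjoint curves carrying weight $0$ in both vectors to obtain a maximal multicurve $P_n$, and concatenate the per-region weight vectors into $a_n$ and $b_n$. Because foliations supported on disjoint subsurfaces combine without interaction in $\MF$, the regional limits $a^R_n\sdot P^R_n\to F|_R$ add up to $a_n\sdot P_n\to F$, and likewise $b_n\sdot P_n\to G$, which is the desired simultaneous approximation. The main obstacle is precisely this last bookkeeping step: one must verify that a single sequence of maximal multicurves, with a single pair of weight vectors, can carry the approximations of $F$ and $G$ on all regions at once --- in particular that the regions where only one foliation is supported are handled harmlessly by zero weights --- and that assembling disjointly supported limits really does recover the global limits in the Thurston topology.
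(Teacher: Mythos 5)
Your proposal is correct and follows essentially the same route as the paper's proof: decompose $F$ and $G$ into their closed-curve components plus minimal components on supporting subsurfaces, apply the Lenzhen--Masur theorem on each subsurface where both foliations restrict to minimal pieces, and assemble the regional multicurves (together with the shared curve components, suitably weighted) into a single approximating sequence. You simply make explicit several points the paper compresses into ``with appropriate weights''---the argument that $i(F,G)=0$ forces the supporting subsurfaces to be equal or disjoint, the use of density of weighted multicurves on subsurfaces carrying only one foliation, and the final zero-weight bookkeeping---so this is a more detailed rendering of the same proof, not a different one.
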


\begin{proof}
Let $\Gamma$ be the multicurve which is the union of all the closed curves in $F$ and $G$.  Every leaf of $F$ is either closed or it is minimal on some
subsurface of $S$; let $S_1,\ldots, S_r$ be the collection of all supporting subsurfaces for the non-closed leaves of $F$ and $G$, and let $F_k$ and $G_k$
be the projections of $F$ and $G$, respectively, to $S_k$.  Then for every $k$, the foliations $F_k$ and $G_k$ are either empty or minimal on $S_k$, and
$i(F_k,G_k)=0$.  Then the multicurve approximation to $F$ and $G$ simultaneously is built by the approximating multicurves on $S_k$ which are obtained
by Theorem ~\ref{approx} along with the curves of $\Gamma$, with appropriate weights.
\end{proof}

\begin{proof}[Proof of Theorem A] To show $\ZZ(F) \subseteq \Star(F)$, 
we assume that $G\in \ZZ(F)$ and must prove that $G\in\Star(F)$.  
By Proposition~\ref{simult}, there is a sequence of pants decompositions $\{P_n\}$ which approach
$F$ with one sequence of weights and $G$ with another.  
But we have $b_n\sdot P_n \in \Star(a_n\sdot P_n)$ because they are disjoint (Theorem~\ref{multi}), so by semicontinuity of stars (Lemma~\ref{semicont}),
this tells us that $G\in \Star(F)$.
\end{proof}

\section{Future questions in \Teich geometry}
Let $\mathcal{S}$ be the set of simple closed curves on the surface $S$, and consider the star metric $\ds$ restricted to $\mathcal{S} \subset \PMF$. Recall that $\ds$ is defined combinatorially as the minimal metric such that
\begin{align*}
d_\star(\xi,\eta)=0 &\iff \xi = \eta,\\
d_\star(\xi,\eta)=1& \iff \eta\in \Star(\xi) ~\hbox{or}~ \xi\in \Star(\eta).
\end{align*}
 Our goal in this section is to compare this metric on $\mathcal{S}$ to the metric $\dc$ coming from the curve graph $\mathcal{C}$. Proposition \ref{propSCC} gave us star membership for disjoint curves, which says
 that $\dc(\alpha,\beta)=1 \implies \ds(\alpha,\beta)=1$.  
This  implies that 
$$\dc(\alpha,\beta)\ge \ds(\alpha,\beta)$$
for arbitrary curves, i.e., the identity map from $(\mathcal S,\dc)$ to $(\mathcal S,\ds)$ is Lipschitz.  In this section, we introduce 
some ideas toward the conjecture that $\dc=\ds$ on $\mathcal S$.

The intuition for stars is that boundary points in the same star are ``hard to separate" with half-spaces.   On this intuition, it seems  natural that points at the opposite ends
of a geodesic should be separable, so in disjoint stars.  
A sufficient condition for this kind of separability is a mild hyperbolicity-like condition on geodesics.

\begin{proposition}
Consider a metric space $X$ with bordification $\overline X$ and a geodesic $\gamma$
with endpoints $\gamma^\pm \in \partial X$.  The following are equivalent.
\begin{itemize}
\item[(SG1)] There exists a compact set $K\subset X$ such that for all sequences
$x_n\to\gamma^+$ and $y_n\to \gamma^-$, the segments $\overline{x_n y_n}$ intersect $K$ for sufficiently large $n$.
\item[(SG2)] There exists a compact set $K\subset X$ and open neighborhoods $V$ of $\gamma^+$
and $W$ of $\gamma^-$ such that any geodesic from $W$ to $V$ intersects $K$.
\end{itemize}
\end{proposition}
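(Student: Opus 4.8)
The plan is to prove the equivalence as two implications, treating $\mathrm{(SG2)}\Rightarrow\mathrm{(SG1)}$ directly and the converse by contraposition. The only nontrivial ingredient is the passage between the neighborhood language of $\mathrm{(SG2)}$ and the sequence language of $\mathrm{(SG1)}$, which I would carry out using a countable neighborhood basis at each endpoint; I would therefore record first countability of $\overline X$ as a standing hypothesis here, consistent with its use in Lemma C and Lemma~\ref{semicont}.

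For $\mathrm{(SG2)}\Rightarrow\mathrm{(SG1)}$, I would simply take the same compact $K$ and argue that the neighborhoods $V,W$ absorb any admissible sequences. Given arbitrary $x_n\to\gamma^+$ and $y_n\to\gamma^-$, convergence in $\overline X$ forces $x_n\in V$ and $y_n\in W$ for all sufficiently large $n$, since $V$ and $W$ are open neighborhoods of $\gamma^+$ and $\gamma^-$. For such $n$ the segment $\overline{x_n y_n}$ is a geodesic joining a point of $W$ to a point of $V$, so $\mathrm{(SG2)}$ guarantees it meets $K$. Thus the same $K$ witnesses $\mathrm{(SG1)}$, and no limiting argument is needed in this direction.

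For the converse I would prove $\neg\mathrm{(SG2)}\Rightarrow\neg\mathrm{(SG1)}$. Negating $\mathrm{(SG2)}$ says: for every compact $K$ and every pair of neighborhoods $V\ni\gamma^+$, $W\ni\gamma^-$, there is a geodesic from $W$ to $V$ disjoint from $K$. Fix an arbitrary compact $K$, and choose decreasing neighborhood bases $\{V_n\}$ of $\gamma^+$ and $\{W_n\}$ of $\gamma^-$ (here first countability is used). Applying $\neg\mathrm{(SG2)}$ to $K$, $V_n$, $W_n$ produces endpoints $x_n\in V_n\cap X$, $y_n\in W_n\cap X$ and a geodesic $\overline{x_n y_n}$ avoiding $K$. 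Since the neighborhoods shrink to the endpoints, $x_n\to\gamma^+$ and $y_n\to\gamma^-$, while the segments miss $K$ for every $n$. This exhibits, for the given $K$, sequences along which the crossing condition fails; as $K$ was arbitrary, no compact set witnesses $\mathrm{(SG1)}$, giving $\neg\mathrm{(SG1)}$.

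The main point requiring care is the quantifier over compact sets, which is where I expect the only real (if mild) obstacle: $\mathrm{(SG1)}$ asserts the existence of a single witnessing $K$, so to refute it from $\neg\mathrm{(SG2)}$ I must produce bad sequences for \emph{every} compact $K$, not merely for one, which the argument above achieves by fixing $K$ at the outset before choosing the shrinking neighborhoods. A secondary subtlety is non-uniqueness of geodesics: I would read $\mathrm{(SG1)}$ as saying that every choice of segment $\overline{x_n y_n}$ eventually meets $K$ and $\mathrm{(SG2)}$ as ``any geodesic from $W$ to $V$ meets $K$,'' and both implications respect this reading, since the forward direction quantifies over all geodesics while in the contrapositive a single $K$-avoiding geodesic already breaks $\mathrm{(SG1)}$.
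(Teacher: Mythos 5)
Your proof is correct and takes essentially the same route as the paper's: the forward implication by noting that the sequences eventually enter $V$ and $W$, and the converse by contraposition, fixing an arbitrary compact $K$ and using shrinking neighborhood bases at $\gamma^\pm$ to produce sequences whose connecting segments all miss $K$. Your explicit flagging of first countability (which the paper invokes implicitly by choosing countable neighborhood bases) and of geodesic non-uniqueness only makes the same argument slightly more careful.
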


\begin{proof}
(SG2) $\implies$ (SG1) is clear because the sequences $x_n,y_n$ eventually enter the neighborhoods $V,W$.

Now suppose $\gamma$  does not satisfy (SG2). Then for all neighborhoods $V$ of $\xi$ and $W$ of $\eta$ and for any compact $K$, there are points $x \in V$ and $y \in W$ such that $\overline{xy} \cap K = \emptyset$. Let $\{V_i\}$ be a countable neighborhood basis of $\xi$ and $\{W_i\}$ a countable neighborhood basis of $\eta$, and fix a compact set $K$. For each $i$, we can find points $x_i \in V_i$ and $y_i \in W_i$ such that $\overline{x_iy_i}$ does not intersect $K$. But then we have sequences $x_i \to \xi$ and $y_i\to \eta$ such that $\overline{x_iy_i}$ does not intersect $K$ for any $i$. We chose $K$ arbitrarily, so this is true for all compact $K$. Thus $\gamma$ does not satisfy (SG1).
\end{proof}

A geodesic satisfying these conditions will be called a {\em sticky geodesic} (see Figure~\ref{sticky-geodesics}), because certain long segments stay close to a basepoint in the middle.
Note that with respect to, say, visual boundaries, Euclidean space does not have any sticky geodesics (because parallel geodesics can have the same endpoints without getting close), while all hyperbolic geodesics are sticky.

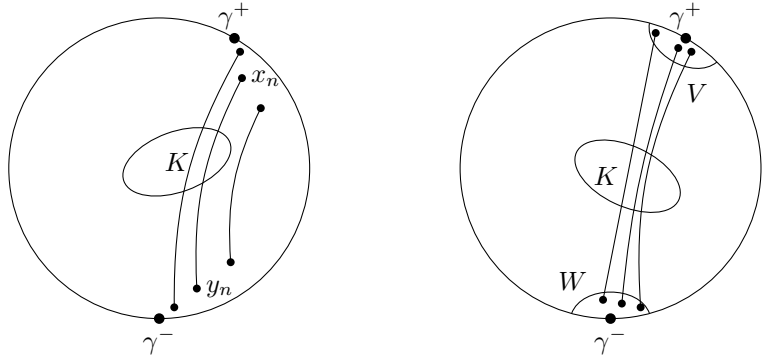
\begin{figure}[H]
\begin{tikzpicture}[scale=.5]
\draw (0,0) circle (4cm);
\fill (0,-4) node[below] {$\gamma^-$} circle (4 pt);
\fill (2,3.46) node[above] {$\gamma^+$} circle (4 pt);
\draw[rotate=20] (.5,0) node {$K$} ellipse (1.5 cm and .8 cm);
\fill (2.7, 1.6) circle (3 pt);
\fill (2.2, 2.4) node[right] {$x_n$} circle (3 pt);
\fill (2.15, 3.1) circle (3 pt);
\fill (1.9,-2.5) circle (3 pt);
\fill (1,-3.2) node[right] {$y_n$} circle (3 pt);
\fill (.4,-3.7) circle (3 pt);
\draw (1.9,-2.5) to[bend left = 15] (2.7,1.6);
\draw (1,-3.2) to[bend left = 15] (2.2, 2.4);
\draw (.4,-3.7) to[bend left = 15] (2.15, 3.1);

\begin{scope}[xshift=12cm]
\draw (0,0) circle (4cm);
\fill (0,-4) node[below] {$\gamma^-$} circle (4 pt);
\node at (-1,-3) {$W$};
\fill (2,3.46) node[above] {$\gamma^+$} circle (4 pt);
\node at (2.3,2) {$V$};
\draw (2.828,2.828) to[bend left = 70] (1.035,3.863);
\draw (-1.035,-3.863) to[bend left = 70] (1.035,-3.863);
\draw[rotate=-25] (.5,0) node[left] {$K$} ellipse (1.5 cm and .8 cm);
\fill (1.2, 3.6) circle (3 pt);
\fill (1.8,3.2) circle (3 pt);
\fill (2.15, 3.1) circle (3 pt);
\fill (-.2,-3.5) circle (3 pt);
\fill (.3,-3.6) circle (3 pt);
\fill (.8,-3.7) circle (3 pt);
\draw (-.2,-3.5) to[bend right = 0] (1.2, 3.6);
\draw (.3,-3.6) to[bend left = 7] (1.8,3.2);
\draw (.8,-3.7) to[bend left = 15] (2.15, 3.1);
\end{scope}
\end{tikzpicture}

\caption{Visualizations of the two definitions of sticky geodesics. \label{sticky-geodesics}}
\end{figure}

\begin{proposition}
If $\gamma$ is a sticky geodesic, then its endpoints are separable by stars:  $\gamma^- \notin \Star(\gamma^+)$
and vice versa.  That is, the endpoints have star-distance at least two.
\end{proposition}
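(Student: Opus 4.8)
The plan is to argue by contradiction through the sequence criterion (Lemma C), using the fact that a sticky geodesic forces every near-endpoint-to-near-endpoint segment through a single compact set. I would work with property (SG2): fix the compact set $K$ together with open neighborhoods $V$ of $\gamma^+$ and $W$ of $\gamma^-$ such that every geodesic from $W$ to $V$ meets $K$. Shrinking if necessary, choose an open neighborhood $U$ of $\gamma^-$ with $\overline U \subseteq W$. Since $K$ is compact, set $D := \sup_{p \in K} d(x_0, p) < \infty$; this finite quantity is the only way the basepoint enters the estimate.

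First I would suppose, toward a contradiction, that $\gamma^- \in \Star(\gamma^+)$. By the forward direction of Lemma C applied to the neighborhood $U$ of $\gamma^-$, there are sequences $x_n \to \gamma^+$ and $y_n \to U$, together with a constant $C \geq 0$, such that $d(y_n, x_n) \leq d(y_n, x_0) + C$ for all $n$. After passing to a subsequence realizing the limit point of $y_n$ in $\overline U \subseteq W$, the openness of $W$ gives $y_n \in W$ for all large $n$; likewise $x_n \to \gamma^+ \in V$ with $V$ open gives $x_n \in V$ for all large $n$. Then (SG2) supplies, for each such $n$, a point $p_n \in K$ lying on the geodesic segment $\overline{x_n y_n}$.

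Next I would extract the contradiction from betweenness. Since $p_n \in \overline{x_n y_n}$, we have $d(y_n, x_n) = d(y_n, p_n) + d(p_n, x_n)$. The triangle inequality and $p_n \in K$ give $d(y_n, p_n) \geq d(y_n, x_0) - D$ and $d(p_n, x_n) \geq d(x_0, x_n) - D$. Because $x_n \to \gamma^+ \in \partial X$ while $X$ is proper, the points $x_n$ leave every bounded set, so $d(x_0, x_n) \to \infty$ and hence $d(p_n, x_n) \to \infty$. Combining, $d(y_n, x_n) \geq d(y_n, x_0) + \bigl(d(p_n, x_n) - D\bigr)$, and for all large $n$ the term $d(p_n, x_n) - D$ exceeds $C$, yielding $d(y_n, x_n) > d(y_n, x_0) + C$, which contradicts the inequality furnished by Lemma C. Therefore $\gamma^- \notin \Star(\gamma^+)$. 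Since (SG2) is symmetric in the roles of $V$ and $W$ — a geodesic from $W$ to $V$ is the same curve as one from $V$ to $W$ — the identical argument with the roles of the endpoints interchanged gives $\gamma^+ \notin \Star(\gamma^-)$, so $\ds(\gamma^+, \gamma^-) \geq 2$.

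The hard part, and really the only step requiring care, is the bookkeeping around the sequence criterion: one must notice that refuting $\gamma^- \in \Star(\gamma^+)$ reduces to defeating the metric inequality for the single convenient neighborhood $U \subseteq W$, and that the segments $\overline{x_n y_n}$ produced by Lemma C are precisely the ones to which stickiness applies (after the harmless passage to a subsequence ensuring $y_n \in W$). Everything downstream is the triangle inequality plus properness to guarantee $d(p_n, x_n) \to \infty$.
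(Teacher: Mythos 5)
Your proof is correct and follows essentially the same route as the paper's: both invoke the sequence criterion (yours as a contradiction via the forward direction, the paper's as a contrapositive), route the segments $\overline{x_n y_n}$ through the compact set $K$ of (SG2), and conclude from the triangle inequality together with $d(p_n,x_n)\to\infty$ that the halfspace inequality fails for every $C$. The only cosmetic differences are that the paper places the basepoint $x_0$ inside $K$ and estimates with $\mathrm{diam}(K)$, whereas you keep $x_0$ arbitrary and absorb it into $D=\sup_{p\in K} d(x_0,p)$ --- a slightly more careful variant that sidesteps any appeal to basepoint-independence of stars.
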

\begin{proof}
By taking the contrapositive of the sequence criterion, we have $\gamma^- \not\in \Star(\gamma^+)$ if and only if there exists a neighborhood $U$ of $\gamma^-$ such that for all sequences $x_n \to \gamma^+$ and $y_n \to U$ and for all $C \geq 0$, we have $d(y_n, x_n) > d(y_n, x_0) + C$ for all $n$ sufficiently large.

Choose neighborhoods $V$ and $W$ of $\gamma^\pm$, respectively, and compact $K$ as in (SG2). Choose any basepoint $x_0 \in K$. Let $x_n \to \gamma^+$ and $y_n \to W$. For $n$ sufficiently large, we know that $d(y_n, x_n) > d(y_n, K) + d(x_n, K)$ because $\overline{x_n y_n}$ hits $K$. We also know that $d(y_n, x_0) < d(y_n, K) + \text{diam}(K)$. Therefore we have that
\[
d(y_n, x_n) - d(y_n, x_0) > d(x_n, K) - \text{diam}(K).
\]
As $n \to \infty$, the right-hand side of this inequality grows larger than any $C \geq 0$. Hence, $\gamma^- \not\in \Star(\gamma^+)$.
\end{proof}

\medskip

Because this is a hyperbolic-like property, it is reasonable to expect for it to hold for thick geodesics and reasonable to hope that it holds for geodesics with ``nice" endpoints.

\begin{conjecture}\label{conj:sticky}
\Teich geodesics with curve endpoints are sticky.
\end{conjecture}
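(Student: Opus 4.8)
The plan is to reduce the conjecture to a statement about a fixed filling pair of curves and then to localize all competing geodesics using Minsky's product-region theorem together with a thick--thin analysis of Teichm\"uller geodesics. First I would record that a geodesic $\gamma$ with endpoints $\gamma^+=[\alpha]$ and $\gamma^-=[\beta]$ in $\mathcal S\subset\PMF$ forces $\alpha$ and $\beta$ to fill $S$: they are the vertical and horizontal foliations of the underlying quadratic differential, and two simple closed curves bind a quadratic differential exactly when they fill. The feature of filling I would exploit is that $z\mapsto \ell_\alpha(z)+\ell_\beta(z)$ is a proper function on $\T(S)$, so every sublevel set $K_M:=\{z:\ell_\alpha(z)+\ell_\beta(z)\le M\}$ is compact. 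The target then becomes: produce a single $M$ so that every segment $\overline{x_ny_n}$ with $x_n\to[\alpha]$ and $y_n\to[\beta]$ meets $K_M$, which is exactly condition (SG1).

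To find such a meeting point I would locate on each segment a \emph{balance point} $z_n$ with $\ell_\alpha(z_n)=\ell_\beta(z_n)=:L_n$; this exists by the intermediate value theorem, since $\ell_\alpha-\ell_\beta$ is negative near $x_n$ (there $\alpha$ is pinched, forcing $\beta$ long because $\ell_\alpha\ell_\beta$ is bounded below in terms of $i(\alpha,\beta)$) and positive near $y_n$. The same inequality gives $L_n^2=\ell_\alpha(z_n)\ell_\beta(z_n)\ge c(\alpha,\beta)>0$, a uniform lower bound, so the entire difficulty is the matching \emph{upper} bound $L_n\le M'$, which would place $z_n\in K_{2M'}$ and close the argument. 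Obtaining it requires ruling out that $z_n$ sits in a thin part governed by a third short curve, or that both $\alpha$ and $\beta$ are simultaneously long there.

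The control I would use comes from subsurface projections. Because $\alpha$ and $\beta$ fill, every essential $Y\subsetneq S$ is crossed by at least one of them, and the relative twisting $d_Y(\alpha,\beta)$ is finite for all $Y$ (two curves twist only finitely in any annulus). As $x_n\to[\alpha]$ and $y_n\to[\beta]$, the projections of their shortest markings converge to $\pi_Y(\alpha)$ and $\pi_Y(\beta)$, so $d_Y(x_n,y_n)\to d_Y(\alpha,\beta)<\infty$ for each fixed $Y$. Feeding this into Rafi's description of which curves get short along a Teichm\"uller geodesic (in terms of these coefficients), the only curves that go short along $\overline{x_ny_n}$ are $\alpha$ near the $x_n$ end and $\beta$ near the $y_n$ end, while the bounded middle — and in particular $z_n$ — stays thick. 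Near each end I would read off the geometry from Minsky's product model $\Thin_\alpha$ and $\Thin_\beta$, and in the thick middle I would use length estimates to fellow-travel $\overline{x_ny_n}$ with $\gamma$.

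The step I expect to be the main obstacle is precisely the passage from \emph{thick} to \emph{bounded}, carried out \emph{uniformly} over all sequences $x_n,y_n$. The thick part of $\T(S)$ is noncompact, so thickness of $z_n$ does not by itself bound $L_n$; one must show that the bounded-combinatorics data inherited from the fixed endpoints confines every balance point to one compact set. This is a contraction- or stability-type claim, and it is delicate here because $\gamma$ is \emph{not} thick globally: it dives into $\Thin_\alpha$ and $\Thin_\beta$ at its two ends, so off-the-shelf Morse/contracting-geodesic technology for cobounded geodesics does not apply directly. This is exactly the regime where the failure of visibility (Gardiner--Masur) and the existence of geodesics with large accumulation (Lenzhen) show that no purely soft argument can succeed; the curve endpoints must be used essentially, presumably by splicing the product-region model at each end to Rafi's length estimates in the middle so as to pin $\overline{x_ny_n}$ to $\gamma$ across a fixed compact window.
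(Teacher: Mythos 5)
First, a point of framing: this statement is posed in the paper as Conjecture~\ref{conj:sticky} --- the authors do not prove it, and neither do you. Your write-up is explicit that the decisive step (``the passage from thick to bounded, carried out uniformly'') is unresolved, so what you have is a program rather than a proof, and a complete argument along these lines would be new mathematics, not a reconstruction of anything in the paper. Several of your reduction steps are sound or repairable: filling endpoints via Gardiner--Masur, properness of $\ell_\alpha+\ell_\beta$ for a filling pair, and the existence of balance points. But two of your supporting claims are stated incorrectly. Convergence $x_n\to[\alpha]$ in $\PMF$ does \emph{not} pinch $\alpha$ --- twist and earthquake paths along $\alpha$ converge to $[\alpha]$ while staying in the thick part --- so the sign of $\ell_\alpha-\ell_\beta$ near $x_n$ must instead be extracted from the projective statement $\ell_\alpha(x_n)/\ell_\beta(x_n)\to i(\alpha,\alpha)/i(\alpha,\beta)=0$, which does hold for large $n$. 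Likewise $\ell_\alpha\ell_\beta$ is \emph{not} uniformly bounded below on $\T(S)$ (as $\ell_\alpha\to 0$ the collar lemma gives only $\ell_\beta\gtrsim i(\alpha,\beta)\log(1/\ell_\alpha)$, so the product tends to $0$); what is true is that the common value $L_n$ at a balance point is bounded below, by applying the collar estimate at that point. These are fixable.

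The genuine gap is the claim that $d_Y(x_n,y_n)\to d_Y(\alpha,\beta)<\infty$ for every fixed subsurface $Y$. Convergence to $[\alpha]$ in $\PMF$ controls only projectivized intersection numbers with $\alpha$; it imposes \emph{no} constraint on $\pi_Y(x_n)$ when $Y$ lies in the complement of $\alpha$ (where $\pi_Y(\alpha)$ is empty), and even the annular coefficient at $\alpha$ itself can diverge, since $x_n$ may twist arbitrarily about $\alpha$ while converging to $[\alpha]$. Concretely, taking $x_n=T_\delta^{k_n}\gamma(t_n)$ for $\delta$ disjoint from $\alpha$ with $k_n\to\infty$ suitably slowly keeps $x_n\to[\alpha]$ while $d_{A_\delta}(x_n,y_n)\to\infty$, so by Rafi's criteria the segments $\overline{x_ny_n}$ acquire long active intervals and thin excursions in subsurfaces invisible to the endpoint pair $(\alpha,\beta)$. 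This destroys the inference that ``only $\alpha$ and $\beta$ get short along $\overline{x_ny_n}$,'' hence the fellow-traveling of the middles with $\gamma$, hence any compactness for the balance points --- which is exactly the obstacle you flagged, now seen to be unavoidable by your route: the bounded-combinatorics data is not in fact inherited from the limits $x_n\to[\alpha]$, $y_n\to[\beta]$, but must be forced by some additional mechanism. This flexibility of sequences with fixed $\PMF$ limits is the same phenomenon behind the Lenzhen-type pathologies the paper cites, and it is presumably why the authors leave stickiness as a conjecture.
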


In the Teichm\"uller metric, recall that the condition on which two foliations are joined by a geodesic
is that they {\em jointly fill}, by a result of Gardiner--Masur \cite{Gardiner-Masur}.  Also note that two simple closed
curves jointly fill iff their curve-complex distance is at least three.  (Curves at distance two have a third curve
disjoint from each.)    Thus, if geodesics with curve endpoints are sticky, we can conclude that  
$$\dc(\alpha,\beta) \ge 3 \implies \ds(\alpha,\beta)\ge 2.$$
This would tell us that $\ds(\alpha, \beta) = 1 \implies \dc(\alpha, \beta) \leq 2$, which would establish a $(2,0)$-quasiisometry between the two metrics.

\begin{conjecture} Suppose $\dc(\alpha,\beta)=2$, so that $\alpha$ and $\beta$ jointly fill a proper
subsurface of $S$.
Given sequences $x_n\to \alpha$ and $y_n\to \beta$, the distance $d(x_n,y_n)$ grows faster than 
$d(x_n,x_0)$ for any fixed $x_0$.  
\end{conjecture}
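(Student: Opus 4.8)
The plan is to prove the conjecture in the sharper form $d(x_n,y_n)-d(x_n,x_0)\to\infty$. Via the contrapositive of the sequence criterion (Lemma C), this separates the two curves in stars: it gives $\alpha\notin\Star(\beta)$, and the same argument with the roles reversed gives $\beta\notin\Star(\alpha)$, so $\ds(\alpha,\beta)\ge 2$. Together with the Lipschitz bound $\dc\ge\ds$ already established, this would pin down $\ds(\alpha,\beta)=2$ whenever $\dc(\alpha,\beta)=2$. Throughout, the halfspaces defining stars are measured in the \Teich metric while the convergences $x_n\to\alpha$ and $y_n\to\beta$ take place in the Thurston topology on $\PMF$; reconciling these two structures is where the real work lies.

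The first step is a clean additive lower bound on $d(x_n,y_n)$ coming from extremal length. Since $\dc(\alpha,\beta)=2>1$ the curves cross, so $i(\alpha,\beta)>0$, and the fundamental inequality $i(\alpha,\beta)^2\le \ext_X(\alpha)\,\ext_X(\beta)$ forces $\beta$ to be extremally long wherever $\alpha$ is extremally short. Using $\beta$ as a test curve in Kerckhoff's formula $d(X,Y)=\tfrac12\log\sup_\gamma \ext_X(\gamma)/\ext_Y(\gamma)$ then yields
\[
d(x_n,y_n)\ \ge\ \tfrac12\log\frac{\ext_{x_n}(\beta)}{\ext_{y_n}(\beta)}\ \ge\ \log i(\alpha,\beta)+\tfrac12\log\frac{1}{\ext_{x_n}(\alpha)}+\tfrac12\log\frac{1}{\ext_{y_n}(\beta)},
\]
a genuine sum rather than a mere maximum. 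The point of isolating the two terms is that the first matches the dominant cost of reaching $x_n$ from the basepoint, while the second is an \emph{extra} cost that $x_0$ does not pay and that diverges as $y_n$ degenerates onto $\beta$.

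The second step is to cancel the shared cost against $d(x_n,x_0)$ using Minsky's product region theorem for the single curve $\alpha$. Because $\beta$ crosses $\alpha$, the point $y_n$ lies outside $\Thin_{\{\alpha\}}$, so any geodesic leaving the deep point $x_n$ toward either $x_0$ or $y_n$ must first escape the $\alpha$-thin part; inside that part the metric agrees with a sup metric on $\T(S\setminus\alpha)\times\H_\alpha$ up to the additive constant $\cc$, and the vertical escape cost in the $\H_\alpha$ factor is $\tfrac12\log(1/\ext_{x_n}(\alpha))+O(1)$ irrespective of the destination. The goal is to subtract this common term so that $d(x_n,y_n)-d(x_n,x_0)$ is reduced, up to bounded error, to a comparison between the distance from the exit locus on $\partial\Thin_{\{\alpha\}}$ to $y_n$ and to $x_0$: the former diverges (reapplying the extremal-length bound from the exit point, where $\beta$ is still being pinched) and the latter is bounded, giving the desired limit.

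The hard part will be making this uniform over \emph{all} Thurston-convergent sequences, including those that wander in moduli space or twist heavily about $\alpha$ on the way in, and indeed even verifying that $\ext_{y_n}(\beta)\to0$ (rather than merely the projective statement guaranteed by Thurston convergence) for every such sequence. The twist parameter $\tau$ in the $\H_\alpha$ factor is essentially free in the sup metric --- this is exactly the degeneracy behind the oversized stars of axial points in Theorem E --- so the geodesics $[x_n,x_0]$ and $[x_n,y_n]$ may leave $\Thin_{\{\alpha\}}$ through wildly different twists, and the two escape costs need not cancel to $O(1)$. This is the concrete manifestation of the mismatch between the Thurston boundary, which records $\alpha$ only projectively and forgets twisting, and the horofunction (Gardiner--Masur) boundary, which does not. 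I expect the resolution to require Rafi's subsurface-projection formula for the \Teich metric, sharpened to additive rather than quasi-isometric accuracy, so that the annular coefficient about $\alpha$ and the projection to the subsurface $Y$ filled by $\alpha$ and $\beta$ can be accounted for exactly.
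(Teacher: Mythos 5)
First, a framing point: this statement is one of the paper's \emph{open conjectures} --- the authors supply no proof of it (they only record that, together with the stickiness conjecture, it would yield $\ds=\dc$ on $\mathcal S$) --- so there is no argument of theirs to compare yours against, and the bar your proposal must clear is an actual proof. It does not clear it, and your final paragraph essentially concedes this. The decisive gap is that Thurston convergence carries none of the extremal-length information your second step needs: $x_n\to\alpha$ in $\PMF$ does \emph{not} imply $\ext_{x_n}(\alpha)\to 0$. The Dehn-twist orbit $x_n=T_\alpha^n x_0$ converges to $\alpha$ in $\PMF$ while staying in the thick part, so your ``deep point $x_n$'' never enters $\Thin_{\{\alpha\}}$, there is no vertical escape cost to cancel, and the term $\tfrac12\log\bigl(1/\ext_{x_n}(\alpha)\bigr)$ in your lower bound is $O(1)$ --- it does not ``match the dominant cost of reaching $x_n$ from the basepoint,'' which grows without bound along the twist orbit. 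Since $\ext_{y_n}(\beta)\to 0$ can fail for the same reason (you flag this for $y_n$ but tacitly assume the dual statement for $x_n$), the additive lower bound of your first step can degenerate to $d(x_n,y_n)\ge O(1)$, which is vacuous precisely on the sequences the conjecture is hardest for. The closing appeal to Rafi's distance formula ``sharpened to additive rather than quasi-isometric accuracy'' names a tool that does not exist: that formula is intrinsically coarse, with uniform multiplicative and additive constants, and no additively exact version is available; as your own discussion of the sup-metric twist degeneracy shows, an additively exact accounting is exactly what the escape-cost cancellation would require.

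There is also a logical mismatch in your reduction. To conclude $\alpha\notin\Star(\beta)$ from the contrapositive of Lemma C, you must establish the divergence $d(x_n,y_n)-d(x_n,x_0)\to\infty$ for \emph{all} sequences $x_n$ eventually entering some fixed neighborhood $U$ of $\alpha$ in $\overline{\T(S)}$ --- such sequences may converge to nearby boundary points, or to nothing at all --- not merely for sequences converging to $\alpha$ itself, which is all the conjecture as stated quantifies over. Compare condition (SG2) and the sticky-geodesic proposition in the paper, where exactly this neighborhood-uniformity is built into the hypothesis. So even a correct proof of the conjecture as literally stated would need a further uniformization step before it separates stars; your opening paragraph elides this. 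What is sound in the proposal: the deduction $\dc(\alpha,\beta)=2$ implies $i(\alpha,\beta)>0$, Minsky's inequality $i(\alpha,\beta)^2\le\ext_X(\alpha)\,\ext_X(\beta)$, and the Kerckhoff-formula lower bound are all correctly applied, and the observation that $y_n\to\beta$ with $i(\alpha,\beta)>0$ forces $y_n$ to eventually leave any fixed $\alpha$-thin part is correct (though it needs the collar-lemma argument you omit, and the threshold is sequence-dependent). These are reasonable first moves, but the core of the conjecture --- uniform control over wandering and heavily twisting sequences --- remains untouched.
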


Together,
these would give us the full result identifying the two metrics (at least for basepointed stars):

\setcounter{theorem}{4}
\begin{conjecture}
The star metric $\ds$ and the curve complex distance $\dc$ are isometric on the set of simple closed curves $\mathcal{S}\subset \PMF$.
\end{conjecture}

\bibliography{gen.bib}{}

\begin{thebibliography}{10}

\bibitem{bon-currents}
Francis Bonahon.
\newblock The geometry of {T}eichm\"uller space via geodesic currents.
\newblock {\em Invent. Math.}, 92(1):139--162, 1988.

\bibitem{Gardiner-Masur}
Frederick~P. Gardiner and Howard Masur.
\newblock Extremal length geometry of {T}eichm\"uller space.
\newblock {\em Complex Variables Theory Appl.}, 16(2-3):209--237, 1991.

\bibitem{gutierrezhorofunction}
Armando~W Guti{\'e}rrez.
\newblock The horofunction boundary of finite-dimensional $\ell_p $ spaces.
\newblock In {\em Colloquium Mathematicum}, volume 155, pages 51--65. Instytut
  Matematyczny Polskiej Akademii Nauk, 2019.

\bibitem{jones2020asymmetry}
Keith Jones and Gregory~A. Kelsey.
\newblock On the asymmetry of stars at infinity, 2020.

\bibitem{kaim-mas}
Vadim~A. Kaimanovich and Howard Masur.
\newblock The {P}oisson boundary of the mapping class group.
\newblock {\em Invent. Math.}, 125(2):221--264, 1996.

\bibitem{karlsson-stars}
Anders Karlsson.
\newblock On the dynamics of isometries.
\newblock {\em Geom. Topol.}, 9:2359--2394 (electronic), 2005.

\bibitem{karlsson-horoballs}
Anders Karlsson, Volker Metz, and Gennady~A Noskov.
\newblock Horoballs in simplices and minkowski spaces.
\newblock {\em International journal of mathematics and mathematical sciences},
  2006, 2006.

\bibitem{Kerckhoff}
Steven~P. Kerckhoff.
\newblock The asymptotic geometry of {T}eichm\"uller space.
\newblock {\em Topology}, 19(1):23--41, 1980.

\bibitem{kitz-rath}
Kyle Kitzmiller and Matt Rathbun.
\newblock The visual boundary of $\mathbb{Z}^2$.
\newblock {\em Involve, a Journal of Mathematics}, 4(2):103--116, 2012.

\bibitem{lenzhen}
Anna Lenzhen.
\newblock Teichm\"uller geodesics that do not have a limit in {${{PMF}}$}.
\newblock {\em Geom. Topol.}, 12(1):177--197, 2008.

\bibitem{len-mas}
Anna Lenzhen and Howard Masur.
\newblock Criteria for the divergence of pairs of {T}eichm\"uller geodesics.
\newblock {\em Geom. Dedicata}, 144:191--210, 2010.

\bibitem{masur-thesis}
Howard Masur.
\newblock On a class of geodesics in {T}eichm\"uller space.
\newblock {\em Ann. of Math. (2)}, 102(2):205--221, 1975.

\bibitem{minsky-product}
Yair~N. Minsky.
\newblock Extremal length estimates and product regions in {T}eichm\"uller
  space.
\newblock {\em Duke Math. J.}, 83(2):249--286, 1996.

\end{thebibliography}
\bibliographystyle{plain}

\end{document}